\documentclass[11pt,a4paper,english]{article}

\pdfoutput=1

\usepackage[latin1]{inputenc}
\usepackage[sort,numbers]{natbib}
\usepackage{babel}
\usepackage[centertags]{amsmath}
\usepackage{enumerate}
\usepackage{amsfonts}
\usepackage{amssymb}
\usepackage{mathtools}
\usepackage{amsthm}
\usepackage{newlfont}
\usepackage{dsfont}
\usepackage{geometry}
\usepackage{mathrsfs}
\usepackage{authblk}
\usepackage{verbatim}
\usepackage{graphicx}
\usepackage[hidelinks]{hyperref}

%\geometry{left=1.5in, right=1.5in, top=1.8in, bottom=1.8in}
%\voffset=-50pt
%\oddsidemargin=-0.5cm
%\textwidth=17cm
%\textheight=25
\setlength{\textheight}{24cm}
\setlength{\textwidth}{16cm}
\setlength{\voffset}{-1cm}
\setlength{\hoffset}{-0,85cm}

\DeclareMathOperator*{\essinf}{ess\,inf}
\DeclareMathOperator*{\esssup}{ess\,sup}

\newtheorem{theorem}{Theorem}[section]
\newtheorem{lemma}[theorem]{Lemma}

\newtheorem{proposition}[theorem]{Proposition}
%%%%   Remember to set contents of both Definition and
%%%%   Remark to roman type (\rm).
\newtheorem{definition}[theorem]{Definition}
\newtheorem{remark}[theorem]{Remark}
\newtheorem{assumption}[theorem]{Assumption}

\newcommand{\msc}[1]{\textbf{MSC2010 Classification:} #1.}

\newcommand{\keywords}[1]{\textbf{Key words:} #1.}

\pagestyle{myheadings}
\markright{\textrm Solving finite time horizon Dynkin games by optimal switching}

\makeatletter  % so @ will be a letter
\@addtoreset{equation}{section}

\makeatother  % so  @ will not be a letter
\setcounter{section}{0}
\setcounter{equation}{0}

\begin{document}
\title{\textbf{
Solving finite time horizon Dynkin games by optimal switching}\footnote{This research was partially supported by EPSRC grant EP/K00557X/1.}}

\author{Randall Martyr\footnote{School of Mathematics, University of Manchester, Oxford Road, Manchester M13 9PL, United Kingdom. email: \texttt{randall.martyr@gmail.com}}}
\maketitle
\begin{abstract}
This paper uses recent results on continuous-time finite-horizon optimal switching problems with negative switching costs to prove the existence of a saddle point in an optimal stopping (Dynkin) game. Sufficient conditions for the game's value to be continuous with respect to the time horizon are obtained using recent results on norm estimates for doubly reflected backward stochastic differential equations. This theory is then demonstrated numerically for the special cases of cancellable call and put options in a Black-Scholes market.
\end{abstract}
%%%%%%%%%%%%%%%%%%%%%%%%%%%%%%%%%%%%%%%%%%%%%%%%%%%%%%%%%%%%%%%%%%%%%%%%%%%%%%%%%%%%%%%%%%%%%
\msc{91A55, 91A05, 93E20, 60G40, 91B99, 62P20, 91A15, 91G60}
\vspace{+4pt}

\noindent\keywords{optimal switching, stopping times, optimal stopping problems, Snell envelope} 
%%%%%%%%%%%%%%%%%%%%%%%%%%%%%%%%%%%%%%%%%%%%%%%%%%%%%%%%%%%%%%%%%%%%%%%%%%%%%%%%%%%%%%%%%%%%%
\section{Introduction}
Recent papers such as \cite{Guo2008b,Hamadene2007,Yushkevich2002} have shown a connection between Dynkin games and optimal switching problems with two modes. In particular, letting $0 < T < \infty$ denote the horizon, the results of \cite{Guo2008b,Hamadene2007} show that the value process $(V_{t})_{0 \le t \le T}$ of a Dynkin game in continuous time (Section~\ref{Section:DynkinGamesIntro} below) exists and satisfies $V_{t} = Y^{1}_{t} - Y^{0}_{t}$, where $Y^{1} = (Y^{1}_{t})_{0 \le t \le T}$ and $Y^{0} = (Y^{0}_{t})_{0 \le t \le T}$ are the respective value processes for the optimal switching problem with initial mode $1$ and $0$. Separately, the papers \cite{Dumitrescu2014,Kobylanski2014} have shown how to construct two non-negative supermartingales that solve a Dynkin game on a finite time horizon. Furthermore, appropriate debut times of these supermartingales can be used to form a saddle point strategy for the game.

It is therefore apparent that \emph{classical two-player} Dynkin games and two-mode optimal switching problems are strongly coupled in the following sense: starting with either the Dynkin game or optimal switching problem, one can use its parameters and solution to formulate and solve the other problem. This paper complements these findings by proving, under appropriate conditions, that the solution to a two-mode optimal switching problem furnishes the existence of a saddle point for the corresponding Dynkin game. This is accomplished by the method of Snell envelopes which appears in \cite{Djehiche2009} for optimal switching problems on one hand, and in \cite{Dumitrescu2014,Kobylanski2014} for Dynkin games on the other hand. In the process, we relate the solution pair to the two-mode optimal switching problem to a pair of supermartingales which lie between the early exit values of the game. This condition is referred to in some contexts as \emph{Mokobodski's hypothesis}.

The content of this paper is as follows. Section~\ref{Section:Preliminaries} introduces the Dynkin game and its auxiliary optimal switching problem. Section~\ref{Section:PreliminaryAssumptions} then outlines some notation and standing assumptions. The main result on the existence of equilibria in the Dynkin game is presented in Section~\ref{Section:NashEquilibrium}. Additional results on the dependence of the game's solution on the time horizon are discussed in Section~\ref{Section:Continuous-Dependence}. Numerics which showcase this theory can be found in Section~\ref{Section:Examples}, followed by the conclusion, acknowledgements and references.

\section{Preliminaries}\label{Section:Preliminaries}
\subsection{The Dynkin game}\label{Section:DynkinGamesIntro}
Optimal stopping games, also referred to as stochastic games of timing or Dynkin games, were introduced by Eugene Dynkin sometime during the 1960s. These games have been studied extensively since then and have garnered renewed interest due to the introduction of Game Contingent Claims (also known as Israeli Options) in \cite{Kifer2000}. The particular variant of the Dynkin game which is described below was studied in recent papers such as \cite{Dumitrescu2014,Guo2008b,Hamadene2006}.

We work on a given complete probability space $\left(\Omega,\mathcal{F},\mathsf{P}\right)$ which is equipped with a filtration $\mathbb{F} = (\mathcal{F}_{t})_{0 \le t \le \infty}$ satisfying $\mathcal{F} = \mathcal{F}_{\infty} \coloneqq \bigvee_{t}\mathcal{F}_{t}$ and the usual conditions of right-continuity and completeness. We use $\mathbf{1}_{A}$ to represent the indicator function of a set (event) $A$. The shorthand notation a.s. means ``almost surely''. For $0 \le T \le \infty$ set $\mathbb{F}_{T} = (\mathcal{F}_{t})_{0 \le t \le T}$, and for each $t \in [0,T]$ let $\mathcal{T}_{t,T}$ denote the set of $\mathbb{F}_{T}$-stopping times $\nu$ which satisfy $t \le \nu \le T$ \hspace{1bp} $\mathsf{P}$-a.s. For a given $S \in \mathcal{T}_{0,T}$, we write $\mathcal{T}_{S,T} = \{\nu \in \mathcal{T}_{0,T} \colon  \nu \ge S \enskip \mathsf{P}-a.s.\}$. Let $\mathsf{E}$ denote the corresponding expectation operator. For notational convenience the dependence on $\omega \in \Omega$ is often suppressed. A horizon $T \in (0,\infty)$ is fixed for the discussion which follows and for the majority of this paper. However, we often emphasise the dependence on $T$ since the horizon is varied below in Section~\ref{Section:Continuous-Dependence}.

Let $t \in [0,T]$ be given and associate with two players $MIN$ and $MAX$ the stopping times $\sigma \in \mathcal{T}_{t,T}$ and $\tau \in \mathcal{T}_{t,T}$. The game between $MIN$ and $MAX$ is played from time $t$ until $\sigma \wedge \tau$, where $x \wedge y \coloneqq \min(x,y)$. During this time $MIN$ pays $MAX$ at a (random) rate of $\psi(t)$ per unit time. If $MIN$ exits the game prior to $T$ and either before or at the same time that $MAX$ exits, $\sigma < T$ and $\sigma \le \tau$, $MIN$ pays $MAX$ the amount $\gamma_{-}(\sigma)$. Alternatively, if $MAX$ exits the game first, $\tau < \sigma$, then $MAX$ pays to $MIN$ the amount $\gamma_{+}(\tau)$. If neither player exits the game before time $T$, we set $\sigma = \tau = T$ and $MIN$ pays $MAX$ the amount $\Gamma$. We define this payoff for the Dynkin game on $[t,T]$ in terms of the conditional expected cost to player $MIN$:
\begin{align}\label{Definition:CostFunction}
D_{t,T}(\sigma,\tau) = {} & \mathsf{E}\biggl[\int_{t}^{\sigma \wedge \tau} \psi(s){d}s + \gamma_{-}(\sigma)\mathbf{1}_{\lbrace \sigma \le \tau \rbrace}\mathbf{1}_{\lbrace \sigma < T \rbrace} - \gamma_{+}(\tau)\mathbf{1}_{\lbrace \tau < \sigma \rbrace} \nonumber \\
& \qquad + \Gamma\mathbf{1}_{\lbrace \sigma = \tau = T \rbrace} \biggm\vert \mathcal{F}_{t}\biggr], \quad \sigma,\tau \in \mathcal{T}_{t,T}
\end{align}

This is a zero-sum game since costs (gains) for $MIN$ are the gains (costs) for $MAX$. For a given $t \in [0,T]$, Player $MIN$ chooses the strategy $\sigma \in \mathcal{T}_{t,T}$ to minimise $D_{t,T}(\sigma,\tau)$ whereas $MAX$ plays the strategy $\tau \in \mathcal{T}_{t,T}$ to maximise it. This leads to upper and lower values for the game on $[t,T]$, which are denoted by $V^{+}_{t}$ and $V^{-}_{t}$ respectively:
\begin{equation}\label{Definition:UpperLowerValues}
V^{+}_{t} = \essinf\limits_{\sigma \in \mathcal{T}_{t,T}}\esssup\limits_{\tau \in \mathcal{T}_{t,T}}D_{t,T}(\sigma,\tau), \qquad V^{-}_{t} = \esssup\limits_{\tau \in \mathcal{T}_{t,T}}\essinf\limits_{\sigma \in \mathcal{T}_{t,T}}D_{t,T}(\sigma,\tau)
\end{equation}

\begin{definition}[Game Value]\label{Definition:GameValue}
	The Dynkin game on $[t,T]$ is said to be ``fair'' if there is equality between the time-$t$ upper and lower values,
	\begin{equation}\label{Definition:StackelbergEquilibrium}
	\essinf\limits_{\sigma \in \mathcal{T}_{t,T}}\esssup\limits_{\tau \in \mathcal{T}_{t,T}}D_{t,T}(\sigma,\tau) = V_{t} = \esssup\limits_{\tau \in \mathcal{T}_{t,T}}\essinf\limits_{\sigma \in \mathcal{T}_{t,T}}D_{t,T}(\sigma,\tau).
	\end{equation}
\end{definition}
The common value, denoted by $V_{t}$, is also referred to as the solution or value of the game on $[t,T]$.

When studying Dynkin games, the first course of action is to verify that the game is fair. Afterwards, one searches for strategies for the players which give the game's value or approximates it closely. This leads to the concept of a \emph{Nash equilibrium}.
\begin{definition}[Nash equilibrium]
	A pair of stopping times $(\sigma^{*},\tau^{*}) \in \mathcal{T}_{t,T} \times \mathcal{T}_{t,T}$ is said to constitute a \emph{Nash equilibrium} or a \emph{saddle point} for the game on $[t,T]$ if for any $\sigma,\tau \in \mathcal{T}_{t,T}$:
	\begin{equation}\label{Definition:NashEquilibriumPoint}
	D_{t,T}(\sigma^{*},\tau) \le D_{t,T}(\sigma^{*},\tau^{*}) \le D_{t,T}(\sigma,\tau^{*})
	\end{equation}
\end{definition}
It is not difficult to verify that the existence of a saddle point $(\sigma^{*},\tau^{*}) \in \mathcal{T}_{t,T} \times \mathcal{T}_{t,T}$ implies the game on $[t,T]$ is fair and its value is given by:
\begin{equation}\label{Result:NashEquilibriumImpliesStackelberg}
\essinf\limits_{\sigma \in \mathcal{T}_{t,T}}\esssup\limits_{\tau \in \mathcal{T}_{t,T}}D_{t,T}(\sigma,\tau) = D_{t,T}(\sigma^{*},\tau^{*}) = \esssup\limits_{\tau \in \mathcal{T}_{t,T}}\essinf\limits_{\sigma \in \mathcal{T}_{t,T}}D_{t,T}(\sigma,\tau)
\end{equation}
Under quite mild integrability and regularity assumptions on $\psi$ and $\gamma_{\pm}$, it is known (for example \cite{Ekstrom2008}) that there exists a c\`{a}dl\`{a}g $\mathbb{F}_{T}$-adapted process $(V_{t})_{0 \le t \le T}$ such that for each $t$ the random variable $V_{t}$ gives the fair value of the Dynkin game on $[t,T]$. Furthermore, if the stopping costs $\gamma_{\pm}$ are sufficiently regular then the debut times $D^{+}_{t}$ and $D^{-}_{t}$ defined by
\[
D^{+}_{t} \coloneqq \inf \lbrace s \ge t \colon V_{s} = -\gamma_{+}(s) \rbrace \wedge T,\hspace{1em} D^{-}_{t} \coloneqq \inf \lbrace s \ge t \colon V_{s} = \gamma_{-}(s) \rbrace \wedge T
\]
form a saddle point $\left(D^{-}_{t},D^{+}_{t}\right)$ for the Dynkin game on $[t,T]$. We arrive at a similar conclusion in this paper using two-mode optimal switching.

\subsection{Two-mode optimal switching}\label{Section:TwoModesOptimalSwitching}
The two-mode optimal switching or ``starting and stopping'' problem has been studied in a variety of contexts as the papers \cite{Guo2008b,Hamadene2007} and the references therein can attest. Following convention, we denote the two modes by $0$ and $1$. For $i \in \{0,1\}$ there is a random profit rate $\psi_{i} \colon \Omega \times [0,T] \to \mathbb{R}$ and time $T$ reward $\Gamma_{i} \colon \Omega \to \mathbb{R}$. For each $(i,j) \in \{0,1\} \times \{0,1\}$ there is a cost for switching from $i$ to $j$ determined by the mapping $\gamma_{i,j} \colon \Omega \times [0,T] \to \mathbb{R}$.

\begin{definition}[Auxiliary two-mode switching problem parameters]\label{Definition:AuxSwitchingParameters}
	Define parameters for the optimal switching problem from the payoff \eqref{Definition:CostFunction} of the Dynkin game as follows:
	\begin{description}
		\item[Switching costs:] For $i \in \{0,1\}$, set $\gamma_{ii}(\cdot) = 0$, $\gamma_{i,1-i}(t) \coloneqq \gamma_{-}(t)\mathbf{1}_{\lbrace i = 0 \rbrace} + \gamma_{+}(t)\mathbf{1}_{\lbrace i = 1 \rbrace}$.
		\item[Profit rate:] Set $\psi_{1}(\cdot) \equiv \psi(\cdot)$ and $\psi_{0}(\cdot) \equiv 0$.
		\item[Terminal reward:] Set $\Gamma_{1} \equiv \Gamma$ and $\Gamma_{0} \equiv 0$.
	\end{description}
\end{definition}

\begin{definition}[Admissible switching controls]\label{Definition:SwitchingControl}
	For a fixed time $t \in [0,T]$ and initial mode $i \in \lbrace 0,1\rbrace$, an admissible switching control $\alpha = (\tau_{n},\iota_{n})_{n \ge 0}$ consists of:
	\begin{enumerate}
		\item a non-decreasing sequence $\lbrace \tau_{n}\rbrace_{n \ge 0} \subset \mathcal{T}_{t,T}$ with $\tau_{0} = t$ $\mathsf{P}$-a.s.
		\item a sequence $\lbrace \iota_{n} \rbrace_{n \ge 0}$, where $\iota_{0} = i$ is the fixed initial value, $\iota_{n} \colon \Omega \to \lbrace0,1 \rbrace$ is $\mathcal{F}_{\tau_{n}}$-measurable and satisfies $\iota_{2n} = i$ and $\iota_{2n+1} = 1-i$ for $n \ge 0$.
		\item The stopping times $\lbrace \tau_{n}\rbrace_{n \ge 0}$ are finite in the following sense:
		\[
		\mathsf{P}\left(\{\tau_{n} < T,\hspace{1bp} \forall n \ge 0\}\right) = 0
		\]
		\item The (double) sequence $\alpha$ satisfies
		\[
		\mathsf{E}\bigl[\sup\nolimits_{n}\left|C^{\alpha}_{n}\right|\bigr] < \infty
		\]
		where $C^{\alpha}_{n}$ is the total cost of the first $n \ge 1$ switches under $\alpha$:
		\[
		C^{\alpha}_{n} \coloneqq \sum\limits_{k = 1}^{n}\gamma_{\iota_{k-1},\iota_{k}}(\tau_{k})\mathbf{1}_{\{\tau_{k} < T\}}, \quad n \ge 1
		\]
	\end{enumerate}
	Let $\mathcal{A}_{t,i}$ denote the set of admissible switching controls. We write $\mathcal{A}_{i}$ when $t = 0$ and drop the superscript $i$ when the initial mode is not important for the discussion.
\end{definition}

Associated with each $\alpha \in \mathcal{A}$ is a (random) function $\mathbf{u} \colon \Omega \times [0,T] \to \lbrace 0,1 \rbrace$ referred to as the mode indicator function:
\begin{equation*}\label{Definition:modeIndicator}
\mathbf{u}_{t} \coloneqq \iota_{0}\mathbf{1}_{[\tau_{0},\tau_{1}]}(t) + \sum\limits_{n \ge 1}\iota_{n}\mathbf{1}_{(\tau_{n},\tau_{n+1}]}(t),\hspace{1em} t \in [0,T]
\end{equation*}

The objective function for the switching control problem associated with the Dynkin game on $[t,T]$ is given by,
\begin{equation}\label{Definition:SwitchingControlObjective}
J(\alpha;t,i) = \mathsf{E}\left[\int_{t}^{T} \psi_{\mathbf{u}_{s}}(s){d}s + \Gamma_{\mathbf{u}_{T}} - \sum_{n \ge 1} \gamma_{\iota_{n}-1,\iota_{n}}(\tau_{n})\mathbf{1}_{\lbrace \tau_{n} < T \rbrace} \biggm \vert \mathcal{F}_{t} \right], \hspace{1em} \alpha \in \mathcal{A}_{t,i}.
\end{equation}

Together with appropriate integrability assumptions on $\psi$ and $\Gamma$, the objective function is well-defined for any $\alpha \in \mathcal{A}$. For $(t,i) \in [0,T] \times \{0,1\}$ given and fixed, the goal is to find a control $\alpha^{*} \in \mathcal{A}_{t,i}$ that maximises the performance index:
\[
J(\alpha^{*};t,i) = \esssup\limits_{\alpha \in \mathcal{A}_{t,i}}J\left(\alpha;t,i\right)
\]

\begin{remark}
	Processes or functions with super(sub)-scripts in terms of the random mode indicators $\iota_{n}$ are interpreted in the following way:
	\begin{align*}
	Y^{\iota_{n}} & = \sum\limits_{j \in \{0,1\}}\mathbf{1}_{\lbrace \iota_{n} = j \rbrace} Y^{j}, \quad n\ge 0 \\
	\gamma_{\iota_{n-1},\iota_{n}}\left(\cdot\right) & = \sum\limits_{j \in \{0,1\}}\sum\limits_{k \in \{0,1\}}\mathbf{1}_{\lbrace \iota_{n-1} = j \rbrace}\mathbf{1}_{\lbrace \iota_{n} = k \rbrace} \gamma_{j,k}\left(\cdot\right), \quad n \ge 1.
	\end{align*}
\end{remark}

\section{Notation and assumptions}\label{Section:PreliminaryAssumptions}
\subsection{Notation}
In this paper we frequently refer to concepts such as ``predictable'' and ``quasi-left-continuous'' from the general theory of the stochastic processes. The reader may consult reference texts such as \cite{Jacod2003,Rogers2000b} for further details. We note that we follow the convention of \cite{Rogers2000a,Rogers2000b} for predictable times and processes (defined on the parameter set $(0,\infty)$).

\begin{enumerate}
	\item For $p \ge 1$, let $L^{p}$ denote the set of random variables $Z$ satisfying $\mathsf{E}\left[|Z|^{p}\right] < \infty$.
	\item For $p \ge 1$, let $\mathcal{M}^{p}$ denote the set of $\mathbb{F}$-progressively measurable, real-valued processes $X = \left(X_{t}\right)_{t \ge 0}$ satisfying,
	\[
	\mathsf{E}\left[\int_{0}^{\infty}|X_{t}|^{p}{d}t\right] < \infty.
	\]
	\item For $p \ge 1$, let $\mathcal{S}^{p}$ denote the set of $\mathbb{F}$-progressively measurable processes $X$ satisfying:
	\[
	\mathsf{E}\left[\left(\sup\limits_{t \ge 0}\left|X_{t}\right|\right)^{p}\right] < \infty.
	\]
	\item Let $\mathcal{Q}$ denote the set of $\mathbb{F}$-adapted, c\`{a}dl\`{a}g processes which are quasi-left-continuous (left-continuous over stopping times).
\end{enumerate}
For a given $0 < T < \infty$ we use the analogous notation $\mathcal{M}^{p}_{T}$, $\mathcal{S}^{p}_{T}$ and $\mathcal{Q}_{T}$ for the finite time horizon $[0,T]$.
\subsection{Assumptions}
In this section $T \in (0,\infty)$ is arbitrary.
\begin{assumption}\label{Assumption:SaddlePointAssumptions}
	We impose the following integrability, measurability and regularity assumptions:
	\begin{itemize}
		\item The filtration $\mathbb{F} = (\mathcal{F}_{t})_{t \ge 0}$ satisfies the usual conditions and is quasi-left-continuous;
		\item The instantaneous payoff rate satisfies $\psi \in \mathcal{M}_{T}^{2}$;
		\item The early-exit stopping costs for the game satisfy $\gamma_{-},\gamma_{+} \in \mathcal{S}_{T}^{2} \cap \mathcal{Q}_{T}$;
		\item The terminal payoff satisfies $\Gamma \in L^{2}$ and is $\mathcal{F}_{T}$-measurable.
	\end{itemize}
\end{assumption}

\begin{assumption}\label{Assumption:StoppingCostsTerminalData}
	Stopping costs assumptions:
	\begin{align}
	i. \quad & -\gamma_{+}(T) \le \Gamma \le \gamma_{-}(T) \quad \mathsf{P}-\text{a.s.}\label{eq:TerminalDataStoppingCosts}\\
	ii. \quad & \forall t \in [0,T]: \quad \gamma_{-}(t) + \gamma_{+}(t) > 0 \quad \mathsf{P}-a.s. \label{eq:NoArbitrage}
	\end{align}
\end{assumption}

Condition~\eqref{eq:TerminalDataStoppingCosts} is standard in the literature on Dynkin games \cite{Ekstrom2008} whilst condition~\eqref{eq:NoArbitrage} is typical of optimal switching problems \cite{Guo2008b}.
\section{Existence of a Nash equilibrium via optimal switching}\label{Section:NashEquilibrium}
In this section we use martingale methods to prove for every $t \in [0,T]$ that there exists a saddle point $(\sigma^{*}_{t},\tau^{*}_{t})$ for the Dynkin game on $[t,T]$ with payoff~\eqref{Definition:CostFunction}.
\subsection{The Snell envelope}
Remember that an $\mathbb{F}_{T}$-progressively measurable process $X$ is said to belong to class $[D]$ if the set of random variables $\lbrace X_{\tau}, \tau \in \mathcal{T}_{0,T} \rbrace$ is uniformly integrable.
\begin{proposition}\label{Proposition:SnellEnvelopeProperties}
	Let $G = (G_{t})_{0 \le t \le T}$ be an adapted, $\mathbb{R}$-valued, c\`{a}dl\`{a}g process that belongs to class $[D]$. Then there exists a unique (up to indistinguishability), adapted $\mathbb{R}$-valued c\`{a}dl\`{a}g process $Z = (Z_{t})_{0 \le t \le T}$ such that $Z$ is the smallest supermartingale which dominates $G$. The process $Z$ is called the Snell envelope of $G$ and it enjoys the following properties.
	\begin{enumerate}
		\item For any $\theta \in \mathcal{T}_{0,T}$ we have:
		\begin{equation}
		Z_{\theta} = \esssup_{\tau \in \mathcal{T}_{\theta,T}}\mathsf{E}\left[G_{\tau} \vert \mathcal{F}_{\theta}\right],\text{ and therefore }Z_{T} = U_{T}.
		\end{equation}
		\item Meyer decomposition: There exist a uniformly integrable c\`{a}dl\`{a}g martingale $M$ and a predictable integrable increasing process $A$ such that for all $0 \le t \le T$,
		\begin{equation}\label{SnellEnvelope:MeyerDecomposition}
		Z_{t} = M_{t} - A_{t}, \hspace{1em} A_{0} = 0.
		\end{equation}
		\item Let $\theta \in \mathcal{T}_{0,T}$ be given and $\{\tau_{n}\}_{n \ge 0} \subset \mathcal{T}_{\theta,T}$ be an increasing sequence of stopping times tending to a limit $\tau \in \mathcal{T}_{\theta,T}$ and such that $\mathsf{E}\left[G^{-}_{\tau_{n}}\right] < \infty$ for $n \ge 0$. Suppose the following condition is satisfied for any such sequence,
		\[
		\limsup_{n \to \infty} G_{\tau_{n}} \le G_{\tau}
		\]
		Then $\tau^{*}_{\theta} \in \mathcal{T}_{\theta,T}$ defined by
		\begin{equation}\label{eq:DebutTime}
		\tau^{*}_{\theta} = \inf\lbrace t \ge \theta \colon Z_{t} = G_{t} \rbrace \wedge T
		\end{equation}
		is optimal after $\theta$ in the sense that:
		\[
		Z_{\theta} = \mathsf{E}\left[Z_{\tau^{*}_{\theta}} \vert \mathcal{F}_{\theta}\right] = \mathsf{E}\left[G_{\tau^{*}_{\theta}} \vert \mathcal{F}_{\theta}\right] = \esssup_{\tau \in \mathcal{T}_{\theta,T}}\mathsf{E}\left[G_{\tau} \vert \mathcal{F}_{\theta}\right]
		\]
		\item For every $\theta \in \mathcal{T}_{0,T}$, if $\tau^{*}_{\theta}$ is the stopping time defined in equation~\eqref{eq:DebutTime}, then the stopped process $\left(Z_{t \wedge \tau^{*}_{\theta}}\right)_{\theta \le t \le T}$ is a (uniformly integrable) c\`{a}dl\`{a}g martingale.
	\end{enumerate}
\end{proposition}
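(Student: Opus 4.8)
The plan is to build $Z$ from the pointwise (in the stopping-time parameter) value function and then read off each of the four properties in turn, following the classical route to the Snell envelope; see \cite{Kobylanski2014,Ekstrom2008} for the value-function and optimality parts and \cite{Rogers2000b} for the Doob--Meyer step.

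First I would fix $\theta \in \mathcal{T}_{0,T}$ and set $\widetilde{Z}_{\theta} := \esssup_{\tau \in \mathcal{T}_{\theta,T}} \mathsf{E}[G_{\tau}\mid\mathcal{F}_{\theta}]$. The key preliminary is that the family $\{\mathsf{E}[G_{\tau}\mid\mathcal{F}_{\theta}] : \tau\in\mathcal{T}_{\theta,T}\}$ is directed upward: for $\tau_{1},\tau_{2}\in\mathcal{T}_{\theta,T}$ the stopping time $\tau_{1}\mathbf{1}_{A}+\tau_{2}\mathbf{1}_{A^{c}}$, with $A=\{\mathsf{E}[G_{\tau_{1}}\mid\mathcal{F}_{\theta}]\ge\mathsf{E}[G_{\tau_{2}}\mid\mathcal{F}_{\theta}]\}\in\mathcal{F}_{\theta}$, realises the pointwise maximum of the two conditional expectations. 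Hence there is an increasing sequence $\tau_{n}$ with $\mathsf{E}[G_{\tau_{n}}\mid\mathcal{F}_{\theta}]\uparrow\widetilde{Z}_{\theta}$, and class $[D]$ makes the convergence hold in $L^{1}$. Combining this with $\mathcal{T}_{\theta',T}\subset\mathcal{T}_{\theta,T}$ for $\theta\le\theta'$ and the tower property gives the supermartingale-system inequality $\mathsf{E}[\widetilde{Z}_{\theta'}\mid\mathcal{F}_{\theta}]\le\widetilde{Z}_{\theta}$. Since the filtration satisfies the usual conditions and $t\mapsto\mathsf{E}[\widetilde{Z}_{t}]=\sup_{\tau\in\mathcal{T}_{t,T}}\mathsf{E}[G_{\tau}]$ is right-continuous (again by class $[D]$), the standard aggregation theorem yields a càdlàg adapted process $Z$, itself of class $[D]$, with $Z_{\theta}=\widetilde{Z}_{\theta}$ a.s.\ for every $\theta$; taking $\tau=\theta$ shows $Z\ge G$, and $\theta=T$ gives $Z_{T}=G_{T}$, which is property~(1). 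For minimality, if $Z'$ is any càdlàg supermartingale with $Z'\ge G$, optional sampling gives $Z'_{\theta}\ge\mathsf{E}[Z'_{\tau}\mid\mathcal{F}_{\theta}]\ge\mathsf{E}[G_{\tau}\mid\mathcal{F}_{\theta}]$ for all $\tau\in\mathcal{T}_{\theta,T}$, hence $Z'_{\theta}\ge Z_{\theta}$, and right-continuity upgrades this to indistinguishability; this also yields uniqueness. Property~(2) is then immediate: $Z$ is a càdlàg supermartingale of class $[D]$ on $[0,T]$, so the Doob--Meyer decomposition gives $Z=M-A$ with $M$ a uniformly integrable càdlàg martingale and $A$ predictable, increasing, integrable, $A_{0}=0$.

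The heart of the matter is (3)–(4), and this is where I expect the main difficulty. I would introduce the relaxed debut times $\tau^{\epsilon}_{\theta}:=\inf\{t\ge\theta : Z_{t}\le G_{t}+\epsilon\}\wedge T$. On $[\theta,\tau^{\epsilon}_{\theta})$ one has $Z_{t}>G_{t}+\epsilon$, and a localisation argument on the Meyer decomposition shows that $A$ does not charge $[\theta,\tau^{\epsilon}_{\theta})$, so $(Z_{t\wedge\tau^{\epsilon}_{\theta}})_{\theta\le t\le T}$ is a uniformly integrable martingale and $Z_{\theta}=\mathsf{E}[Z_{\tau^{\epsilon}_{\theta}}\mid\mathcal{F}_{\theta}]$. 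By right-continuity $Z_{\tau^{\epsilon}_{\theta}}\le G_{\tau^{\epsilon}_{\theta}}+\epsilon$ on $\{\tau^{\epsilon}_{\theta}<T\}$ and $Z_{\tau^{\epsilon}_{\theta}}=G_{T}$ on $\{\tau^{\epsilon}_{\theta}=T\}$, whence $Z_{\theta}\le\mathsf{E}[G_{\tau^{\epsilon}_{\theta}}\mid\mathcal{F}_{\theta}]+\epsilon$, i.e.\ $\tau^{\epsilon}_{\theta}$ is $\epsilon$-optimal. Letting $\epsilon\downarrow0$, the times $\tau^{\epsilon}_{\theta}$ increase to some $\bar{\tau}\le\tau^{*}_{\theta}$, and the regularity hypothesis $\limsup_{n}G_{\tau_{n}}\le G_{\tau}$ along increasing sequences is exactly what forces $\bar{\tau}=\tau^{*}_{\theta}$: combined with $Z_{\tau^{\epsilon}_{\theta}}\le G_{\tau^{\epsilon}_{\theta}}+\epsilon$ and the left limits of the càdlàg process $Z$, it yields $Z_{\bar{\tau}}\le G_{\bar{\tau}}$, hence $Z_{\bar{\tau}}=G_{\bar{\tau}}$ and $\bar{\tau}\ge\tau^{*}_{\theta}$. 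Passing to the limit in the $\epsilon$-optimality identity (dominated convergence, class $[D]$) and using that $A$ is flat on $[\theta,\tau^{*}_{\theta}]$ then gives $Z_{\theta}=\mathsf{E}[Z_{\tau^{*}_{\theta}}\mid\mathcal{F}_{\theta}]=\mathsf{E}[G_{\tau^{*}_{\theta}}\mid\mathcal{F}_{\theta}]=\esssup_{\tau\in\mathcal{T}_{\theta,T}}\mathsf{E}[G_{\tau}\mid\mathcal{F}_{\theta}]$, which is~(3); and since $A_{t\wedge\tau^{*}_{\theta}}=A_{\theta}$ we get $Z_{t\wedge\tau^{*}_{\theta}}=M_{t\wedge\tau^{*}_{\theta}}-A_{\theta}$, a stopped uniformly integrable càdlàg martingale, which is~(4).

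The main obstacle is thus the limit identification $\bar{\tau}=\tau^{*}_{\theta}$ together with the control of the jumps of $G$ and $Z$ at the debut time — precisely the point at which the $\limsup$ regularity condition is consumed; the remaining steps are routine manipulations with conditional expectations, optional sampling and the usual conditions.
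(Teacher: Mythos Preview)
Your proposal is a faithful sketch of the classical construction of the Snell envelope and its optimality properties, essentially the route found in \cite{ElKaroui1981,Peskir2006}. The paper, however, does not give its own proof of this proposition at all: immediately after the statement it simply writes ``Proofs for these properties can be found in \cite{ElKaroui1981,Morimoto1982,Peskir2006} for instance'' and moves on. So there is nothing to compare against beyond noting that your argument is precisely the kind of proof those references contain; in that sense you have supplied what the paper deliberately outsourced.

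One small technical caution on your sketch: the step where you pass from $Z_{\tau^{\epsilon}_{\theta}}\le G_{\tau^{\epsilon}_{\theta}}+\epsilon$ to $Z_{\bar{\tau}}=G_{\bar{\tau}}$ via ``left limits of the c\`{a}dl\`{a}g process $Z$'' actually yields $Z_{\bar{\tau}-}\le G_{\bar{\tau}}$ first, not $Z_{\bar{\tau}}\le G_{\bar{\tau}}$, and bridging that gap requires an extra word (e.g.\ that any negative jump of the supermartingale $Z$ at a predictable time is carried by $A$, together with the predictability of $\bar{\tau}$ as an announced limit, or a direct minimality argument showing $Z_{\bar{\tau}}\le Z_{\bar{\tau}-}$ cannot drop below $G_{\bar{\tau}}$ strictly). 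Likewise, the claim that $A$ is flat on $[\theta,\tau^{\epsilon}_{\theta})$ is true but is usually justified by a minimality-contradiction argument rather than ``localisation''. These are standard refinements present in the cited sources, not genuine gaps in your strategy.
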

Proofs for these properties can be found in \cite{ElKaroui1981,Morimoto1982,Peskir2006} for instance.
\subsection{The martingale approach to optimal switching problems}\label{Section:OptimalSwitchingSnellEnvelopes}
Under Assumptions~\ref{Assumption:SaddlePointAssumptions} and \ref{Assumption:StoppingCostsTerminalData}, we can prove that there exists a unique pair of processes $\left(Y^{0}_{t},Y^{1}_{t}\right)_{0 \le t \le T}$ such that for $i \in \lbrace 0,1 \rbrace$, $Y^{i}$ solves the optimal switching problem in a probabilistic sense. This can be accomplished using the theory of Snell envelopes and the details can be found in a separate paper \cite{Martyr2014b}.

\begin{theorem}\label{Theorem:OptimalSwitchingProcesses}
	There exists a unique pair of processes $\left(Y^{0}_{t},Y^{1}_{t}\right)_{0 \le t \le T}$ belonging to $\mathcal{S}_{T}^{2} \cap \mathcal{Q}_{T}$ satisfying $\mathsf{P}-a.s.$,
	\begin{equation}\label{eq:OptimalSwitchingProcesses}
	\begin{cases}
	Y^{i}_{t} = \esssup\limits_{\theta \in \mathcal{T}_{t,T}} \mathsf{E}\left[\int_{t}^{\theta} \psi_{i}(s){d}s + \Gamma_{i}\mathbf{1}_{\lbrace \theta = T \rbrace} + \left\lbrace Y^{1-i}_{\theta} - \gamma_{i,1-i}(\theta) \right\rbrace \mathbf{1}_{\lbrace \theta < T\rbrace} \biggm \vert \mathcal{F}_{t}\right] \\
	Y^{i}_{T} = \Gamma_{i}
	\end{cases}
	\end{equation}
	where $i \in \lbrace 0,1 \rbrace$ and $0 \le t \le T$. Furthermore, for every $(t,i) \in [0,T] \times \lbrace 0,1 \rbrace$, there exists a control $\alpha^{*} \in \mathcal{A}_{t,i}$ such that
	\begin{equation*}\label{eq:DynamicOptimalSwitchingProblem}
	Y^{i}_{t} = J(\alpha^{*};t,i) = \esssup\limits_{\alpha \in \mathcal{A}_{t,i}}J\left(\alpha;t,i\right)
	\end{equation*}
\end{theorem}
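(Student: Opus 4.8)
The plan is to construct the pair $(Y^{0},Y^{1})$ by a monotone Picard iteration built on the Snell envelope operator of Proposition~\ref{Proposition:SnellEnvelopeProperties}, and then to extract the optimal control from the debut times at which each value process first meets its switching obstacle; this is the scheme behind the ``theory of Snell envelopes'' mentioned before the statement, and it follows \cite{Djehiche2009} in spirit. To absorb the running payoff I would set $K^{i}_{t} \coloneqq \int_{0}^{t}\psi_{i}(s)\,ds$, which lies in $\mathcal{S}^{2}_{T}$ under Assumption~\ref{Assumption:SaddlePointAssumptions}, so that an optimal stopping problem with running reward $\psi_{i}$ and stopping/terminal reward is the Snell envelope of a process of the form ``$K^{i}_{t}+(\text{obstacle})$'' minus $K^{i}_{t}$. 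Define the iterates by $Y^{i,0}_{t}\coloneqq \mathsf{E}[\int_{t}^{T}\psi_{i}(s)\,ds+\Gamma_{i}\mid\mathcal{F}_{t}]$ (the ``never switch'' value) and, for $n\ge 0$, let $Y^{i,n+1}$ be the Snell envelope of
\[
G^{i,n}_{t}\coloneqq K^{i}_{t}+\Gamma_{i}\mathbf{1}_{\{t=T\}}+\bigl(Y^{1-i,n}_{t}-\gamma_{i,1-i}(t)\bigr)\mathbf{1}_{\{t<T\}},
\]
shifted back by $K^{i}_{t}$. Square-integrability of $\psi,\gamma_{\pm},\Gamma$ makes each $G^{i,n}$ c\`adl\`ag and of class $[D]$, so Proposition~\ref{Proposition:SnellEnvelopeProperties} applies and yields $Y^{i,n+1}_{t}=\esssup_{\theta\in\mathcal{T}_{t,T}}\mathsf{E}[\int_{t}^{\theta}\psi_{i}\,ds+\Gamma_{i}\mathbf{1}_{\{\theta=T\}}+(Y^{1-i,n}_{\theta}-\gamma_{i,1-i}(\theta))\mathbf{1}_{\{\theta<T\}}\mid\mathcal{F}_{t}]$; note that Assumption~\ref{Assumption:StoppingCostsTerminalData}(i), equation~\eqref{eq:TerminalDataStoppingCosts}, says exactly that switching at $T$ is never strictly profitable, so each Snell envelope carries the correct terminal value $Y^{i,n+1}_{T}=\Gamma_{i}$ without an artificial jump at the horizon.

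Next I would establish monotonicity, a uniform bound, and convergence. Choosing $\theta=T$ in the formula for $Y^{i,1}$ gives $Y^{i,1}\ge Y^{i,0}$, and since the Snell envelope is nondecreasing in its obstacle, $Y^{1-i,n+1}\ge Y^{1-i,n}$ propagates to $G^{i,n+1}\ge G^{i,n}$ and hence $Y^{i,n+1}\ge Y^{i,n}$, so $n\mapsto Y^{i,n}$ is nondecreasing. For an upper bound uniform in $n$ one must exploit the no-arbitrage condition~\eqref{eq:NoArbitrage} rather than any crude estimate (iterating $|\gamma_{i,1-i}|$ naively would accumulate $n$ copies): the positivity of the round-trip cost $\gamma_{-}(\cdot)+\gamma_{+}(\cdot)$ forces an a priori bound on the accumulated switching cost along any control, giving $Y^{i,n}_{t}\le\mathsf{E}[\int_{0}^{T}|\psi(s)|\,ds+|\Gamma|+\sup_{s}(\gamma_{-}(s))^{-}+\sup_{s}(\gamma_{+}(s))^{-}\mid\mathcal{F}_{t}]+c$, an $\mathcal{S}^{2}_{T}$ bound independent of $n$ (this a priori estimate is the technical heart of \cite{Martyr2014b}). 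By monotone convergence $Y^{i,n}\uparrow Y^{i}$ with $Y^{i}\in\mathcal{S}^{2}_{T}$; passing to the limit in the Snell-envelope characterisation — monotone convergence inside the conditional expectation, and the commutation of increasing limits with $\esssup$ over $\theta$ — shows that $(Y^{0},Y^{1})$ solves~\eqref{eq:OptimalSwitchingProcesses} and that it is again the Snell envelope of the obstacle $G^{i,\infty}_{t}=K^{i}_{t}+\Gamma_{i}\mathbf{1}_{\{t=T\}}+(Y^{1-i}_{t}-\gamma_{i,1-i}(t))\mathbf{1}_{\{t<T\}}$, whence Proposition~\ref{Proposition:SnellEnvelopeProperties} returns the c\`adl\`ag regularity and, with quasi-left-continuity of $\gamma_{\pm}$ and of $\mathbb{F}$, membership in $\mathcal{Q}_{T}$ (a Dini-type argument being needed to push the regularity through the monotone limit). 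Uniqueness follows by a two-sided induction: any solution $(\bar Y^{0},\bar Y^{1})$ of~\eqref{eq:OptimalSwitchingProcesses} satisfies $Y^{i,n}\le\bar Y^{i}$ for all $n$ (the iteration starts below any solution and the operator is monotone), so $Y^{i}\le\bar Y^{i}$; reversing the roles gives $Y^{i}=\bar Y^{i}$.

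For the optimal control, fix $(t,i)$ and define recursively $\tau_{0}=t$, $\iota_{0}=i$, $\iota_{n+1}=1-\iota_{n}$ and
\[
\tau_{n+1}\coloneqq\inf\bigl\{s\ge\tau_{n}\colon Y^{\iota_{n}}_{s}=Y^{1-\iota_{n}}_{s}-\gamma_{\iota_{n},1-\iota_{n}}(s)\bigr\}\wedge T.
\]
By Proposition~\ref{Proposition:SnellEnvelopeProperties}(3)--(4), on each interval $[\tau_{n},\tau_{n+1}]$ the process $(K^{\iota_{n}}_{s}+Y^{\iota_{n}}_{s})$ is a uniformly integrable martingale, and at $\tau_{n+1}$ on $\{\tau_{n+1}<T\}$ it equals $K^{\iota_{n}}_{\tau_{n+1}}+Y^{\iota_{n+1}}_{\tau_{n+1}}-\gamma_{\iota_{n},\iota_{n+1}}(\tau_{n+1})$ by definition of the debut time; concatenating these martingale pieces and booking the jumps at the switch times produces a uniformly integrable martingale on $[t,T]$ with value $Y^{i}_{t}$ at time $t$ and value $\int_{t}^{T}\psi_{\mathbf{u}_{s}}(s)\,ds+\Gamma_{\mathbf{u}_{T}}-C^{\alpha^{*}}_{\infty}$ at time $T$, so $Y^{i}_{t}=J(\alpha^{*};t,i)$ once $\alpha^{*}=(\tau_{n},\iota_{n})_{n}$ is admissible. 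The admissibility conditions of Definition~\ref{Definition:SwitchingControl} — in particular $\mathsf{P}(\{\tau_{n}<T,\ \forall n\})=0$ and $\mathsf{E}[\sup_{n}|C^{\alpha^{*}}_{n}|]<\infty$ — are again where~\eqref{eq:NoArbitrage} is essential: if $\tau_{n}\uparrow\tau_{\infty}$ with $\tau_{\infty}<T$ (or with $\tau_{\infty}=T$ but $\tau_{n}<T$ for all $n$), then quasi-left-continuity of $Y^{0},Y^{1}$ and of $\gamma_{\pm}$ lets one pass to the limit in the debut identities for both switching directions, and adding the two limiting equations forces $\gamma_{-}(\tau_{\infty})+\gamma_{+}(\tau_{\infty})=0$, contradicting~\eqref{eq:NoArbitrage}; together with the a priori cost bound this also controls $\sup_{n}|C^{\alpha^{*}}_{n}|$ in $L^{1}$. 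Finally, the reverse inequality $Y^{i}_{t}\ge J(\alpha;t,i)$ for arbitrary $\alpha\in\mathcal{A}_{t,i}$ is the supermartingale half of the same argument (each $K^{\iota_{n}}+Y^{\iota_{n}}$ is a supermartingale dominating its obstacle, so optional sampling along the concatenation gives the bound), and combined with the equality for $\alpha^{*}$ this yields $Y^{i}_{t}=J(\alpha^{*};t,i)=\esssup_{\alpha\in\mathcal{A}_{t,i}}J(\alpha;t,i)$.

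The step I expect to be the main obstacle is the pair of a priori estimates that underpin everything else and both of which rest on the structural condition~\eqref{eq:NoArbitrage} rather than on any sign constraint for the individual costs $\gamma_{-},\gamma_{+}$ (which may be negative, as the title ``negative switching costs'' signals): the $\mathcal{S}^{2}_{T}$ bound for the Picard iterates that is uniform in the iteration index, and the finiteness of the number of switches of the candidate optimal control $\alpha^{*}$, needed for its admissibility. The accompanying regularity point — that the monotone limit of the c\`adl\`ag, quasi-left-continuous iterates again lies in $\mathcal{S}^{2}_{T}\cap\mathcal{Q}_{T}$ — is routine but not automatic and is handled exactly as in \cite{Djehiche2009,Martyr2014b}.
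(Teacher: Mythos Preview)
The paper does not actually prove Theorem~\ref{Theorem:OptimalSwitchingProcesses}: it states the result and refers the reader to \cite{Martyr2014b} for the details, saying only that ``this can be accomplished using the theory of Snell envelopes''. Your proposal is precisely a sketch of that argument --- the monotone Picard iteration on Snell envelopes in the style of \cite{Djehiche2009}, with the no-arbitrage condition~\eqref{eq:NoArbitrage} driving both the uniform $\mathcal{S}^{2}_{T}$ bound on the iterates and the finiteness of the candidate optimal switching sequence --- so in substance you are reproducing the approach of the cited paper rather than offering an alternative.

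One small point worth tightening: your uniqueness step (``reversing the roles gives $Y^{i}=\bar Y^{i}$'') is not quite an argument as written. The inequality $Y^{i,n}\le\bar Y^{i}$ only shows that the Picard limit is the \emph{minimal} solution; there is no symmetric iteration from above to reverse. The clean route is to postpone uniqueness until after the verification: once you have shown that \emph{any} solution $(\bar Y^{0},\bar Y^{1})$ of~\eqref{eq:OptimalSwitchingProcesses} satisfies $\bar Y^{i}_{t}=\esssup_{\alpha\in\mathcal{A}_{t,i}}J(\alpha;t,i)$ (the supermartingale half gives $\ge$, and the debut-time control built from $\bar Y$ gives equality, exactly as you do for $Y$), uniqueness is immediate because the right-hand side does not depend on the solution. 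This is how \cite{Djehiche2009,Martyr2014b} organise it.
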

\subsection{Existence of a Nash Equilibrium}
Let $Y^{0}$ and $Y^{1}$ be the processes in Theorem~\ref{Theorem:OptimalSwitchingProcesses} and define $G^{i} = \left(G^{i}_{t}\right)_{0 \le t \le T}$, $i \in \{0,1\}$, by:
\begin{equation}\label{eq:GainProcess2mode}
G^{i}_{t} = \Gamma_{i}\mathbf{1}_{\lbrace t = T \rbrace} + \left\lbrace Y^{1-i}_{t} - \gamma_{i,1-i}(t) \right\rbrace \mathbf{1}_{\lbrace t < T\rbrace}
\end{equation}
The process $\left(G^{i}_{t} + \int_{0}^{t} \psi_{i}(s){d}s\right)_{0 \le t \le T}$ is c\`{a}dl\`{a}g and in $\mathcal{S}_{T}^{2}$. By Proposition~\ref{Proposition:SnellEnvelopeProperties} above, the process $\left(Y^{i}_{t} + \int_{0}^{t} \psi_{i}(s){d}s\right)_{0 \le t \le T}$ is the Snell envelope of $\left(G^{i}_{t} + \int_{0}^{t} \psi_{i}(s){d}s\right)_{0 \le t \le T}$.
By Assumptions~\ref{Assumption:SaddlePointAssumptions} and \ref{Assumption:StoppingCostsTerminalData}, and as $Y^{i} \in \mathcal{S}_{T}^{2} \cap \mathcal{Q}_{T}$ for $i \in \mathbb{I}$, $G^{i}$ is quasi-left-continuous on $[0,T)$ with a possible positive jump at $T$. We can therefore apply property 3 of Proposition~\ref{Proposition:SnellEnvelopeProperties} to verify that for any $t \in [0,T]$, the stopping time $\rho^{i,*}_{t}$ defined by
\begin{equation}\label{eq:OptimalStoppingTime}
\rho^{i,*}_{t} = \inf\lbrace s \ge t \colon Y^{i}_{s} = Y^{1-i}_{s} - \gamma_{i,1-i}(s) \rbrace \wedge T
\end{equation}
is the optimal first switching time on $[t,T]$ when starting in mode $i \in \{0,1\}$. For each $t \in [0,T]$, use \eqref{eq:OptimalStoppingTime} to define a pair of stopping times $(\sigma^{*}_{t},\tau^{*}_{t})$ by
\begin{equation}\label{eq:OptimalSwitchingTimes}
\sigma^{*}_{t} = \rho^{0,*}_{t}, \qquad \tau^{*}_{t}  = \rho^{1,*}_{t}
\end{equation}
We will prove that $(\sigma^{*}_{t},\tau^{*}_{t})$ is a saddle point for the Dynkin game on $[t,T]$. In order to do so, we first establish the following lemma which relates the pair $\left(Y^{0},Y^{1}\right)$ to \emph{Mokobodski's hypothesis}. 
\begin{lemma}\label{Lemma:MokobodskiCondition}
	The processes $Y^{0}$ and $Y^{1}$ of Theorem~\ref{Theorem:OptimalSwitchingProcesses} satisfy the following condition:
	\begin{equation}\label{eq:MokobodskiConditionPrimer}
	\forall \tau \in \mathcal{T}_{0,T}:\hspace{1em} -\gamma_{+}(\tau) \le Y^{1}_{\tau} - Y^{0}_{\tau} \le \gamma_{-}(\tau),\hspace{1em}\mathsf{P}-a.s.
	\end{equation}
\end{lemma}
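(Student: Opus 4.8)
The plan is to obtain both bounds in \eqref{eq:MokobodskiConditionPrimer} directly from the fact that $Y^{0}$ and $Y^{1}$ are Snell envelopes of their respective switching obstacles, treating the cases $\{\tau<T\}$ and $\{\tau=T\}$ separately.

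First I would invoke the identification recorded immediately above the lemma: for $i\in\{0,1\}$ the process $\bigl(Y^{i}_{t}+\int_{0}^{t}\psi_{i}(s)\,ds\bigr)_{0\le t\le T}$ is the Snell envelope of $\bigl(G^{i}_{t}+\int_{0}^{t}\psi_{i}(s)\,ds\bigr)_{0\le t\le T}$, where $G^{i}$ is the gain process \eqref{eq:GainProcess2mode}. A Snell envelope dominates its obstacle pathwise, and both processes here are c\`{a}dl\`{a}g, so the inequality $Y^{i}_{t}+\int_{0}^{t}\psi_{i}(s)\,ds \ge G^{i}_{t}+\int_{0}^{t}\psi_{i}(s)\,ds$ persists when $t$ is replaced by an arbitrary $\tau\in\mathcal{T}_{0,T}$. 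Cancelling the common (finite) term $\int_{0}^{\tau}\psi_{i}(s)\,ds$ yields
\[
Y^{i}_{\tau}\ \ge\ \Gamma_{i}\mathbf{1}_{\{\tau=T\}}+\bigl\{Y^{1-i}_{\tau}-\gamma_{i,1-i}(\tau)\bigr\}\mathbf{1}_{\{\tau<T\}},\qquad i\in\{0,1\},\ \ \mathsf{P}\text{-a.s.}
\]
Equivalently one may simply take the competitor $\theta=\tau$ in the essential supremum appearing in \eqref{eq:OptimalSwitchingProcesses}, read at the stopping time $\tau$ via property~1 of Proposition~\ref{Proposition:SnellEnvelopeProperties}.

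On the event $\{\tau<T\}$ I would use the two instances $i=1$ and $i=0$ of the displayed inequality together with the identifications $\gamma_{1,0}(\cdot)=\gamma_{+}(\cdot)$ and $\gamma_{0,1}(\cdot)=\gamma_{-}(\cdot)$ from Definition~\ref{Definition:AuxSwitchingParameters}: the case $i=1$ gives $Y^{1}_{\tau}-Y^{0}_{\tau}\ge-\gamma_{+}(\tau)$, and the case $i=0$ gives $Y^{0}_{\tau}-Y^{1}_{\tau}\ge-\gamma_{-}(\tau)$, i.e.\ $Y^{1}_{\tau}-Y^{0}_{\tau}\le\gamma_{-}(\tau)$. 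On the complementary event $\{\tau=T\}$ I would instead use the terminal values $Y^{1}_{T}=\Gamma_{1}=\Gamma$ and $Y^{0}_{T}=\Gamma_{0}=0$ furnished by Theorem~\ref{Theorem:OptimalSwitchingProcesses}, so that there $Y^{1}_{\tau}-Y^{0}_{\tau}=\Gamma$, and then appeal to Assumption~\ref{Assumption:StoppingCostsTerminalData}, inequality~\eqref{eq:TerminalDataStoppingCosts}, which states exactly $-\gamma_{+}(T)\le\Gamma\le\gamma_{-}(T)$; since $\gamma_{\pm}(\tau)=\gamma_{\pm}(T)$ on $\{\tau=T\}$ this is $-\gamma_{+}(\tau)\le Y^{1}_{\tau}-Y^{0}_{\tau}\le\gamma_{-}(\tau)$. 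Piecing the two events together gives \eqref{eq:MokobodskiConditionPrimer}.

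I do not expect a genuine obstacle here: the lemma merely records that each $Y^{i}$ lies above its own switching obstacle, glued to the terminal compatibility \eqref{eq:TerminalDataStoppingCosts}; note in particular that the no-arbitrage condition \eqref{eq:NoArbitrage} plays no role in this step. The only mild points requiring care are the passage from deterministic times to stopping times in the domination inequality (immediate from c\`{a}dl\`{a}g regularity, the bound holding pathwise) and the elementary bookkeeping of the switching-cost labels on the two events.
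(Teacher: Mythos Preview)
Your proposal is correct and follows essentially the same route as the paper's proof: both use the Snell-envelope domination $Y^{i}_{\tau}\ge G^{i}_{\tau}$ to handle $\{\tau<T\}$, and the terminal values together with \eqref{eq:TerminalDataStoppingCosts} to handle $\{\tau=T\}$. Your additional remarks (the alternative via the competitor $\theta=\tau$ in \eqref{eq:OptimalSwitchingProcesses}, and the observation that \eqref{eq:NoArbitrage} is not needed) are accurate but do not change the argument.
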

\begin{proof}
	For $i \in \lbrace 0,1 \rbrace$, let $G^{i} = \left(G^{i}_{t}\right)_{0 \le t \le T}$ be defined as in equation~\eqref{eq:GainProcess2mode}. Remember that $Y^{i}_{t} + \int_{0}^{t} \psi_{i}(s){d}s$ is the Snell envelope of $G^{i}_{t} + \int_{0}^{t} \psi_{i}(s){d}s$ on $0 \le t \le T$. Let $\tau \in \mathcal{T}_{0,T}$ be arbitrary. By the dominating property of the (right-continuous) Snell envelope, $Y^{i}_{\tau} \ge G^{i}_{\tau}$ holds $\mathsf{P}$-a.s. and this shows
	\[
	0 \le  Y^{i}_{\tau} - G^{i}_{\tau} = Y^{i}_{\tau} + \gamma_{i,1-i}(\tau) - Y^{1-i}_{\tau} \enskip \text{almost surely on} \enskip \lbrace \tau < T \rbrace 
	\]
	From this we obtain
	\[
	-\gamma_{+}(\tau) \le Y^{1}_{\tau} - Y^{0}_{\tau} \le \gamma_{-}(\tau) \enskip \text{almost surely on} \enskip \lbrace \tau < T \rbrace
	\]
	On the other hand, we have $Y^{1}_{\tau} - Y^{0}_{\tau} = \Gamma\hspace{1em}\mathsf{P}-\text{a.s.}$ on the event $\lbrace \tau = T \rbrace$. Using this with equation~\eqref{eq:TerminalDataStoppingCosts} gives
	\[
	-\gamma_{+}(\tau) \le Y^{1}_{\tau} - Y^{0}_{\tau} \le \gamma_{-}(\tau) \enskip \text{almost surely on} \enskip \lbrace \tau = T \rbrace
	\]
	and the claim~\eqref{eq:MokobodskiConditionPrimer} holds. 
\end{proof}

\begin{theorem}\label{Theorem:ExistenceOfSaddlePoint}
	Let $Y^{0}$ and $Y^{1}$ be the processes in Theorem~\ref{Theorem:OptimalSwitchingProcesses}. Then for every $t \in [0,T]$, $(\sigma^{*}_{t},\tau^{*}_{t})$ defined in equation~\eqref{eq:OptimalSwitchingTimes} satisfies:
	\begin{equation}\label{Proposition:OptimalSwitchingProcessesAndDynkinGamePayoffClaim}
	Y^{1}_{t} - Y^{0}_{t} = D_{t,T}(\sigma^{*}_{t},\tau^{*}_{t}) \quad \mathsf{P}-\text{a.s.}
	\end{equation}
	where $D_{t,T}(\cdot,\cdot)$ is the payoff~\eqref{Definition:CostFunction}. Furthermore, for any $\sigma,\tau \in \mathcal{T}_{t,T}$:
	\begin{equation}\label{Proposition:OptimalSwitchingProcessesAndSaddlePointClaim}
	D_{t,T}(\sigma^{*}_{t},\tau) \le D_{t,T}(\sigma^{*}_{t},\tau^{*}_{t}) \le D_{t,T}(\sigma,\tau^{*}_{t})
	\end{equation}
\end{theorem}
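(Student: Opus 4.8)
The plan is to identify $Y^{i}_{t}+\int_{0}^{t}\psi_{i}(s)\,ds$ as the Snell envelope of $G^{i}_{t}+\int_{0}^{t}\psi_{i}(s)\,ds$ and to exploit the optimality of the debut times $\rho^{i,*}_{t}$ supplied by property 3 of Proposition~\ref{Proposition:SnellEnvelopeProperties}, together with the martingale-stopped property 4, to compute $D_{t,T}(\sigma^{*}_{t},\tau^{*}_{t})$ explicitly and then to check the two inequalities in~\eqref{Proposition:OptimalSwitchingProcessesAndSaddlePointClaim} one at a time. First I would record the key structural fact that on the stochastic interval $[t,\rho^{i,*}_{t}]$ the optimal-switching process behaves like a martingale: more precisely, property 4 gives that $\bigl(Y^{i}_{s\wedge\rho^{i,*}_{t}}+\int_{0}^{s\wedge\rho^{i,*}_{t}}\psi_{i}(u)\,du\bigr)_{t\le s\le T}$ is a uniformly integrable martingale, so that for any stopping time $\nu\in\mathcal{T}_{t,T}$,
\[
Y^{i}_{t}=\mathsf{E}\Bigl[\int_{t}^{\nu\wedge\rho^{i,*}_{t}}\psi_{i}(s)\,ds+Y^{i}_{\nu\wedge\rho^{i,*}_{t}}\Bigm\vert\mathcal{F}_{t}\Bigr].
\]
On the event $\{\rho^{i,*}_{t}<T\}$ we have $Y^{i}_{\rho^{i,*}_{t}}=Y^{1-i}_{\rho^{i,*}_{t}}-\gamma_{i,1-i}(\rho^{i,*}_{t})$ by definition of the debut time and the fact that $G^{i}$ is reached there (using quasi-left-continuity of $Y^{i}$ and $\gamma_{i,1-i}$ on $[0,T)$), while on $\{\rho^{i,*}_{t}=T\}$ we have $Y^{i}_{T}=\Gamma_{i}$; and for $\nu\le\rho^{i,*}_{t}$ we simply get $Y^{i}_{t}=\mathsf{E}[\int_{t}^{\nu}\psi_{i}(s)\,ds+Y^{i}_{\nu}\mid\mathcal{F}_{t}]$.

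Next I would prove~\eqref{Proposition:OptimalSwitchingProcessesAndDynkinGamePayoffClaim}. Writing $\sigma^{*}_{t}=\rho^{0,*}_{t}$, $\tau^{*}_{t}=\rho^{1,*}_{t}$, set $\theta^{*}=\sigma^{*}_{t}\wedge\tau^{*}_{t}$. Apply the martingale identity for $Y^{1}$ with $\nu=\theta^{*}\le\tau^{*}_{t}=\rho^{1,*}_{t}$ to get $Y^{1}_{t}=\mathsf{E}[\int_{t}^{\theta^{*}}\psi_{1}(s)\,ds+Y^{1}_{\theta^{*}}\mid\mathcal{F}_{t}]$, and likewise $Y^{0}_{t}=\mathsf{E}[Y^{0}_{\theta^{*}}\mid\mathcal{F}_{t}]$ since $\psi_{0}\equiv0$ and $\theta^{*}\le\sigma^{*}_{t}=\rho^{0,*}_{t}$. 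Subtracting, $Y^{1}_{t}-Y^{0}_{t}=\mathsf{E}[\int_{t}^{\theta^{*}}\psi(s)\,ds+(Y^{1}_{\theta^{*}}-Y^{0}_{\theta^{*}})\mid\mathcal{F}_{t}]$. Now decompose $\Omega$ according to the four cases $\{\sigma^{*}_{t}<\tau^{*}_{t}\}$, $\{\tau^{*}_{t}<\sigma^{*}_{t}\}$, $\{\sigma^{*}_{t}=\tau^{*}_{t}<T\}$, $\{\sigma^{*}_{t}=\tau^{*}_{t}=T\}$ and evaluate $Y^{1}_{\theta^{*}}-Y^{0}_{\theta^{*}}$ on each: on $\{\sigma^{*}_{t}\le\tau^{*}_{t},\,\sigma^{*}_{t}<T\}$ the definition of $\rho^{0,*}_{t}$ gives $Y^{0}_{\theta^{*}}=Y^{1}_{\theta^{*}}-\gamma_{0,1}(\theta^{*})=Y^{1}_{\theta^{*}}-\gamma_{-}(\theta^{*})$, hence $Y^{1}_{\theta^{*}}-Y^{0}_{\theta^{*}}=\gamma_{-}(\sigma^{*}_{t})$; on $\{\tau^{*}_{t}<\sigma^{*}_{t}\}$ (so $\tau^{*}_{t}<T$ necessarily, since $\sigma^{*}_{t}\le T$) the definition of $\rho^{1,*}_{t}$ gives $Y^{1}_{\theta^{*}}=Y^{0}_{\theta^{*}}-\gamma_{1,0}(\theta^{*})=Y^{0}_{\theta^{*}}-\gamma_{+}(\theta^{*})$, hence $Y^{1}_{\theta^{*}}-Y^{0}_{\theta^{*}}=-\gamma_{+}(\tau^{*}_{t})$; and on $\{\sigma^{*}_{t}=\tau^{*}_{t}=T\}$ we get $Y^{1}_{T}-Y^{0}_{T}=\Gamma_{1}-\Gamma_{0}=\Gamma$. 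Comparing the resulting expression with the definition~\eqref{Definition:CostFunction} of $D_{t,T}(\sigma^{*}_{t},\tau^{*}_{t})$ term by term yields~\eqref{Proposition:OptimalSwitchingProcessesAndDynkinGamePayoffClaim}. (A small point to handle carefully: when $\sigma^{*}_{t}=\tau^{*}_{t}<T$ both $\gamma_{-}$ and $-\gamma_{+}$ representations are available but~\eqref{Definition:CostFunction} counts it as $\sigma\le\tau$, i.e.\ the $\gamma_{-}$ term; one must check these are consistent, which they are because at such a point $Y^{1}-Y^{0}$ equals both $\gamma_{-}$ and $-\gamma_{+}$ simultaneously only if $\gamma_{-}+\gamma_{+}=0$, contradicting~\eqref{eq:NoArbitrage}; so in fact $\mathsf{P}(\sigma^{*}_{t}=\tau^{*}_{t}<T)=0$, which also justifies ignoring that overlap.)

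For the saddle-point inequalities~\eqref{Proposition:OptimalSwitchingProcessesAndSaddlePointClaim}, take the right inequality $D_{t,T}(\sigma^{*}_{t},\tau^{*}_{t})\le D_{t,T}(\sigma,\tau^{*}_{t})$ for an arbitrary $\sigma\in\mathcal{T}_{t,T}$; the left one is symmetric (swapping the roles of $Y^{0}$ and $Y^{1}$, of $\gamma_{-}$ and $\gamma_{+}$). Fix $\tau^{*}_{t}=\rho^{1,*}_{t}$ and set $\eta=\sigma\wedge\tau^{*}_{t}$. Using the martingale-stopped property for $Y^{1}$ on $[t,\rho^{1,*}_{t}]$ and the supermartingale (dominating) property for $Y^{0}$, I would obtain $Y^{1}_{t}\le\mathsf{E}[\int_{t}^{\eta}\psi(s)\,ds+Y^{1}_{\eta}\mid\mathcal{F}_{t}]$ is in fact an equality (since $\eta\le\rho^{1,*}_{t}$), whereas $Y^{0}_{t}\ge\mathsf{E}[Y^{0}_{\eta}\mid\mathcal{F}_{t}]$ by the supermartingale property (here $\psi_{0}\equiv0$, so no integral term). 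Subtracting gives $Y^{1}_{t}-Y^{0}_{t}\le\mathsf{E}[\int_{t}^{\eta}\psi(s)\,ds+(Y^{1}_{\eta}-Y^{0}_{\eta})\mid\mathcal{F}_{t}]$. Then I bound $Y^{1}_{\eta}-Y^{0}_{\eta}$ from above on each relevant event: on $\{\sigma\le\tau^{*}_{t},\,\sigma<T\}$, $Y^{1}_{\eta}-Y^{0}_{\eta}=Y^{1}_{\sigma}-Y^{0}_{\sigma}\le\gamma_{-}(\sigma)$ by Lemma~\ref{Lemma:MokobodskiCondition}; on $\{\tau^{*}_{t}<\sigma\}$, $\eta=\tau^{*}_{t}=\rho^{1,*}_{t}<T$ and there $Y^{1}_{\eta}-Y^{0}_{\eta}=-\gamma_{+}(\tau^{*}_{t})$ exactly; on $\{\sigma=\tau^{*}_{t}=T\}$, $Y^{1}_{\eta}-Y^{0}_{\eta}=\Gamma$. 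Matching these with~\eqref{Definition:CostFunction} shows $Y^{1}_{t}-Y^{0}_{t}\le D_{t,T}(\sigma,\tau^{*}_{t})$, and combined with~\eqref{Proposition:OptimalSwitchingProcessesAndDynkinGamePayoffClaim} this is exactly the right-hand inequality. The left-hand inequality follows by the same argument with the roles interchanged. The main obstacle I anticipate is the bookkeeping on the boundary events — in particular making sure that the inequalities from the Snell-envelope dominating property point the right way on $\{\sigma=\tau^{*}_{t}\}$ and on $\{\cdot=T\}$, where the gain process $G^{i}$ has a possible upward jump at $T$ — and confirming that the martingale-stopped property 4 is applicable on the stopped interval up to each $\rho^{i,*}_{t}$; these are where one has to be genuinely careful rather than merely mechanical.
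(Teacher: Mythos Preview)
Your proposal is correct and follows essentially the same route as the paper: both arguments use that $Y^{i}+\int_{0}^{\cdot}\psi_{i}$ is the Snell envelope of $G^{i}+\int_{0}^{\cdot}\psi_{i}$, invoke the stopped-martingale property (Proposition~\ref{Proposition:SnellEnvelopeProperties}, item~4) on $[t,\rho^{i,*}_{t}]$ to obtain the identity $Y^{1}_{t}-Y^{0}_{t}=\mathsf{E}[\int_{t}^{\theta^{*}}\psi\,ds+(Y^{1}_{\theta^{*}}-Y^{0}_{\theta^{*}})\mid\mathcal{F}_{t}]$, and then evaluate $Y^{1}_{\theta^{*}}-Y^{0}_{\theta^{*}}$ on the events determined by which of $\sigma^{*}_{t},\tau^{*}_{t}$ occurs first, using Lemma~\ref{Lemma:MokobodskiCondition} and the supermartingale inequality for the saddle-point bounds. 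Your additional observation that $\mathsf{P}(\sigma^{*}_{t}=\tau^{*}_{t}<T)=0$ via~\eqref{eq:NoArbitrage} is a clean way to dispose of that overlap, though the paper's decomposition into $\{\sigma^{*}_{t}\le\tau^{*}_{t}\}$ versus $\{\tau^{*}_{t}<\sigma^{*}_{t}\}$ handles it implicitly by always using the $\gamma_{-}$ representation on the former event.
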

\begin{proof}
	The claim is trivially satisfied for $t = T$, so henceforth let $t \in [0,T)$ be a given but arbitrary time. For $i \in \lbrace 0,1 \rbrace$, let $G^{i} = \left(G^{i}_{t}\right)_{0 \le t \le T}$ be defined as in equation~\eqref{eq:GainProcess2mode}. Define a process $\hat{Y}^{1} = \left(\hat{Y}^{1}_{t}\right)_{0 \le t \le T}$ by $\hat{Y}^{1}_{t} \coloneqq Y^{1}_{t} + \int_{0}^{t} \psi(r){d}r$. By Theorem \MakeUppercase{\romannumeral 2}.77.4 of \cite{Rogers2000a}, a stopped supermartingale is also a supermartingale. For every $\sigma,\tau \in \mathcal{T}_{t,T}$ the stopped Snell envelopes $\left(Y^{0}_{s \wedge (\sigma \wedge \tau^{*}_{t})}\right)_{t \le s \le T}$ and $\left(\hat{Y}^{1}_{s \wedge (\sigma^{*}_{t} \wedge \tau)} \right)_{t  \le s \le T}$ are therefore supermartingales. Additionally using the martingale property of the stopped Snell envelope in Proposition~\ref{Proposition:SnellEnvelopeProperties}, we see that $\hat{Y}^{1} - Y^{0}$ satisfies the following:
	\begin{enumerate}
		\item $\left(\hat{Y}^{1}_{s} - Y^{0}_{s}\right)_{t \le s \le (\sigma^{*}_{t} \wedge \tau^{*}_{t})} \text{ is a martingale}$;
		\item for any $\sigma,\tau \in \mathcal{T}_{t,T}$, $\left(\hat{Y}^{1}_{s} - Y^{0}_{s}\right)_{t \le s \le (\sigma^{*}_{t} \wedge \tau)} \text{ is a supermartingale}$;
		\item for any $\sigma,\tau \in \mathcal{T}_{t,T}$, $\left(\hat{Y}^{1}_{s} - Y^{0}_{s}\right)_{t \le s \le (\sigma \wedge \tau^{*}_{t})} \text{ is a submartingale}$.
	\end{enumerate}
	This characterisation enables us to prove both \eqref{Proposition:OptimalSwitchingProcessesAndDynkinGamePayoffClaim} and \eqref{Proposition:OptimalSwitchingProcessesAndSaddlePointClaim}. The arguments used to establish \eqref{Proposition:OptimalSwitchingProcessesAndSaddlePointClaim} are essentially the same as which we use to show \eqref{Proposition:OptimalSwitchingProcessesAndDynkinGamePayoffClaim}, modulo straightforward changes from equalities to inequalities based on Assumption~\ref{Assumption:StoppingCostsTerminalData} and Lemma~\ref{Lemma:MokobodskiCondition}. We therefore only prove \eqref{Proposition:OptimalSwitchingProcessesAndDynkinGamePayoffClaim}.
	
	The martingale property of $\hat{Y}^{1} - Y^{0}$ on $[t,\sigma^{*}_{t} \wedge \tau^{*}_{t}]$ allows us to deduce the following:
	\begin{equation}\label{eq:MartingaleCharacterisation1}
	Y^{1}_{t} - Y^{0}_{t} = \mathsf{E}\left[\int_{t}^{\sigma^{*}_{t} \wedge \tau^{*}_{t}} \psi(r){d}r + Y^{1}_{\sigma^{*}_{t} \wedge \tau^{*}_{t}} - Y^{0}_{\sigma^{*}_{t} \wedge \tau^{*}_{t}} \biggm \vert \mathcal{F}_{t}\right]
	\end{equation}
	
	The term involving the pair $\left(Y^{0},Y^{1}\right)$ inside of the conditional expectation may be rewritten as:
	\begin{align}\label{eq:MartingaleCharacterisationStoppingCosts}
	\mathsf{E}\left[Y^{1}_{\sigma^{*}_{t} \wedge \tau^{*}_{t}} - Y^{0}_{\sigma^{*}_{t} \wedge \tau^{*}_{t}} \big\vert \mathcal{F}_{t} \right] = {} & \mathsf{E}\bigl[\bigl(Y^{1}_{\sigma^{*}_{t}} - Y^{0}_{\sigma^{*}_{t}}\bigr)\mathbf{1}_{\left\lbrace \sigma^{*}_{t} \le \tau^{*}_{t} \right\rbrace} \big\vert \mathcal{F}_{t} \bigr] \nonumber \\
	& \qquad + \mathsf{E}\bigl[\bigl(Y^{1}_{\tau^{*}_{t}} - Y^{0}_{\tau^{*}_{t}}\bigr)\mathbf{1}_{\left\lbrace \tau^{*}_{t} < \sigma^{*}_{t} \right\rbrace} \big\vert \mathcal{F}_{t} \bigr]
	\end{align}
	
	By equation~\eqref{eq:OptimalSwitchingTimes} and conditional on the event $\left\lbrace \tau^{*}_{t} < T \right\rbrace$, optimality of the stopping time $\tau^{*}_{t}$ gives the following:
	\begin{equation}\label{eq:MAXPLAYERSTOPSFIRST}
	Y^{1}_{\tau^{*}_{t}}\mathbf{1}_{\left\lbrace \tau^{*}_{t} < T \right\rbrace} = \left[-\gamma_{+}\left(\tau^{*}_{t}\right) + Y^{0}_{\tau^{*}_{t}}\right]\mathbf{1}_{\left\lbrace \tau^{*}_{t} < T \right\rbrace}
	\end{equation}
	
	Furthermore, $\mathbf{1}_{\lbrace \sigma^{*}_{t} > \tau^{*}_{t} \rbrace} = \mathbf{1}_{\lbrace \sigma^{*}_{t} > \tau^{*}_{t} \rbrace}\mathbf{1}_{\lbrace \tau^{*}_{t} \le T\rbrace} = \mathbf{1}_{\lbrace \sigma^{*}_{t} > \tau^{*}_{t} \rbrace}\mathbf{1}_{\lbrace \tau^{*}_{t} < T\rbrace}$ since $\tau^{*}_{t} \le T$ and $\sigma^{*}_{t} \le T$ $\mathsf{P}$-a.s., and we can use equation~\eqref{eq:MAXPLAYERSTOPSFIRST} to verify the following: $\mathsf{P}$-a.s.,
	\begin{align}\label{eq:EXPECTEDCOSTMAXPLAYERSTOPSFIRST}
	\mathsf{E}\left[\left(Y^{1}_{\tau^{*}_{t}} - Y^{0}_{\tau^{*}_{t}}\right)\mathbf{1}_{\left\lbrace \tau^{*}_{t} < \sigma^{*}_{t} \right\rbrace} \big\vert \mathcal{F}_{t}\right] & = \mathsf{E}\left[\left(Y^{1}_{\tau^{*}_{t}} - Y^{0}_{\tau^{*}_{t}}\right)\mathbf{1}_{\left\lbrace \tau^{*}_{t} < \sigma^{*}_{t} \right\rbrace}\mathbf{1}_{\lbrace \tau^{*}_{t} < T\rbrace} \big\vert \mathcal{F}_{t} \right] \nonumber \\
	& = \mathsf{E}\left[\left(-\gamma_{+}\left(\tau^{*}_{t}\right)\right)\mathbf{1}_{\left\lbrace \tau^{*}_{t} < \sigma^{*}_{t} \right\rbrace} \big\vert \mathcal{F}_{t} \right]
	\end{align}
	
	By equation~\eqref{eq:OptimalSwitchingTimes} and conditional on the event $\left\lbrace \sigma^{*}_{t} < T \right\rbrace$, optimality of the stopping time $\sigma^{*}_{t}$ gives:
	\[
	Y^{0}_{\sigma^{*}_{t}}\mathbf{1}_{\left\lbrace \sigma^{*}_{t} < T \right\rbrace} = \left[-\gamma_{-}\left(\sigma^{*}_{t}\right) + Y^{1}_{\sigma^{*}_{t}}\right]\mathbf{1}_{\left\lbrace \sigma^{*}_{t} < T \right\rbrace}
	\]
	which is used to deduce:
	\begin{equation}\label{eq:EXPECTEDCOSTMINPLAYERSTOPSFIRST}
	\mathsf{E}\left[\left(Y^{1}_{\sigma^{*}_{t}} - Y^{0}_{\sigma^{*}_{t}}\right)\mathbf{1}_{\left\lbrace \sigma^{*}_{t} \le \tau^{*}_{t} \right\rbrace}\mathbf{1}_{\left\lbrace \sigma^{*}_{t} < T \right\rbrace} \big\vert \mathcal{F}_{t} \right] = \mathsf{E}\left[\gamma_{-}\left(\sigma^{*}_{t}\right)\mathbf{1}_{\left\lbrace \sigma^{*}_{t} \le \tau^{*}_{t} \right\rbrace}\mathbf{1}_{\left\lbrace \sigma^{*}_{t} < T \right\rbrace} \big\vert \mathcal{F}_{t} \right]
	\end{equation}
	
	Since $\tau^{*}_{t} \le T$ $\mathsf{P}$-a.s. we have $\mathbf{1}_{\left\lbrace \sigma^{*}_{t} \le \tau^{*}_{t} \right\rbrace}\mathbf{1}_{\left\lbrace \sigma^{*}_{t} = T \right\rbrace} = \mathbf{1}_{\left\lbrace \sigma^{*}_{t} = \tau^{*}_{t} = T \right\rbrace}$, and using $Y^{1}_{T} = \Gamma$ and $Y^{0}_{T} = 0$ a.s., we get:
	\begin{equation}\label{eq:EXPECTEDTERMINALCOST}
	\mathsf{E}\left[\left(Y^{1}_{\sigma^{*}_{t}} - Y^{0}_{\sigma^{*}_{t}}\right)\mathbf{1}_{\left\lbrace \sigma^{*}_{t} \le \tau^{*}_{t} \right\rbrace}\mathbf{1}_{\left\lbrace \sigma^{*}_{t} = T \right\rbrace} \big\vert \mathcal{F}_{t} \right] = \mathsf{E}\left[\Gamma\mathbf{1}_{\left\lbrace \sigma^{*}_{t} = \tau^{*}_{t} = T \right\rbrace} \big\vert \mathcal{F}_{t}\right]
	\end{equation}
	
	Again, since $\sigma^{*}_{t} \le T$ $\mathsf{P}$-a.s., we can use equations~\eqref{eq:EXPECTEDCOSTMINPLAYERSTOPSFIRST} and~\eqref{eq:EXPECTEDTERMINALCOST} to assert:
	\begin{align}\label{eq:EXPECTEDCOSTMINPLAYER}
	\mathsf{E}\left[\left(Y^{1}_{\sigma^{*}_{t}} - Y^{0}_{\sigma^{*}_{t}}\right)\mathbf{1}_{\left\lbrace \sigma^{*}_{t} \le \tau^{*}_{t} \right\rbrace} \big\vert \mathcal{F}_{t} \right] = {} & \mathsf{E}\left[\left(Y^{1}_{\sigma^{*}_{t}} - Y^{0}_{\sigma^{*}_{t}}\right)\mathbf{1}_{\left\lbrace \sigma^{*}_{t} \le \tau^{*}_{t} \right\rbrace}\left(\mathbf{1}_{\left\lbrace \sigma^{*}_{t} < T \right\rbrace} + \mathbf{1}_{\left\lbrace \sigma^{*}_{t} = T \right\rbrace}\right) \big\vert \mathcal{F}_{t} \right] \nonumber \\
	= {} & \mathsf{E}\bigl[\gamma_{-}\left(\sigma^{*}_{t}\right)\mathbf{1}_{\left\lbrace \sigma^{*}_{t} \le \tau^{*}_{t} \right\rbrace}\mathbf{1}_{\left\lbrace \sigma^{*}_{t} < T \right\rbrace} \big\vert \mathcal{F}_{t}\bigr] \nonumber \\
	& \qquad + \mathsf{E}\bigl[\Gamma\mathbf{1}_{\left\lbrace \sigma^{*}_{t} = \tau^{*}_{t} = T \right\rbrace} \big\vert \mathcal{F}_{t}\bigr]
	\end{align}
	
	We then prove the claim~\eqref{Proposition:OptimalSwitchingProcessesAndDynkinGamePayoffClaim} by using equations~\eqref{eq:MartingaleCharacterisationStoppingCosts},~\eqref{eq:EXPECTEDCOSTMAXPLAYERSTOPSFIRST} and~\eqref{eq:EXPECTEDCOSTMINPLAYER} in equation~\eqref{eq:MartingaleCharacterisation1}.
\end{proof}
\begin{remark}
	The results of Theorem~\ref{Theorem:ExistenceOfSaddlePoint} were obtained in a similar fashion to several other papers in the literature which have used probabilistic approaches. For instance, \cite{Morimoto1984a} (particularly Theorem~1) which uses martingale methods for Dynkin games; \cite{Peskir2009} (particularly Theorem~2.1) which has a semi-harmonic characterisation of the value function for the Dynkin game in a Markovian setting; and \cite{Dumitrescu2014,Hamadene2006} which use the concept of doubly reflected backward stochastic differential equations.
\end{remark}
\begin{remark}
	Although we started with a Dynkin game and subsequently formulated an optimal switching problem, we could have derived these results by doing the reverse. More precisely, take any two-mode optimal switching problem (satisfying the assumptions in Section~\ref{Section:PreliminaryAssumptions}) with terminal reward data $\Gamma_{1},\Gamma_{0}$, and instantaneous profit processes $\psi_{1},\psi_{0}$. We then formulate the corresponding Dynkin game by setting $\Gamma \coloneqq \Gamma_{1} - \Gamma_{0}$, $\psi \coloneqq  \psi_{1} - \psi_{0}$ and using the switching cost function to identify the stopping costs for the game as in Definition~\ref{Definition:AuxSwitchingParameters}.
\end{remark}
\section{Dependence of the game's solution on the time horizon}\label{Section:Continuous-Dependence}
We suppose in this section and the next that there exists a standard Brownian motion $B = (B_{t})_{t \ge 0}$ defined on $(\Omega,\mathcal{F},\mathsf{P})$, and furthermore that $\mathbb{F} = (\mathcal{F}_{t})_{t \ge 0}$ is the completed natural filtration of $B$. It is well known that in this case all $\mathbb{F}$-stopping times are predictable. Therefore, all $\mathbb{F}$-adapted processes belonging to $\mathcal{Q}$ have paths which are $\mathsf{P}$-almost surely continuous.

Suppose that $\psi$ and $\gamma_{\pm}$ of Section~\ref{Section:DynkinGamesIntro} are defined on all of $[0,\infty)$ with $\psi \in \mathcal{M}^{2}$ and $\gamma_{\pm} \in \mathcal{S}^{2} \cap \mathcal{Q}$ ($\gamma_{\pm}$ still satisfying Assumption~\ref{Assumption:StoppingCostsTerminalData}). Additionally, for simplicity and ease of notation in what follows, we suppose $\psi \equiv 0$ and define two processes $L = (L_{t})_{t \ge 0}$ and $U = (U_{t})_{t \ge 0}$ by $L_{t} = -\gamma_{+}(t)$ and $U_{t} = \gamma_{-}(t)$.

For $0 < T \le \infty$ and $t \in [0,T]$, we define the following payoff for a Dynkin game: for $\sigma,\tau \in \mathcal{T}_{t,T}$,
\begin{equation}\label{Definition:CostFunction-Infinite-Horizon}
D_{t,T}(\sigma,\tau) = \mathsf{E}\left[U_{\sigma}\mathbf{1}_{\lbrace \sigma \le \tau \rbrace}\mathbf{1}_{\lbrace \sigma < T \rbrace} + L_{\tau}\mathbf{1}_{\lbrace \tau < \sigma \rbrace} + \Gamma^{T}\mathbf{1}_{\lbrace \sigma = \tau = T \rbrace} \biggm\vert \mathcal{F}_{t}\right]
\end{equation}
where $\Gamma^{T} \in L^{2}$ is $\mathcal{F}_{T}$-measurable. In the case $T = \infty$ we assume $\liminf_{t}U_{t} \le \limsup_{t}L_{t}$ and $\Gamma^{\infty}$ satisfies either $\Gamma^{\infty} \coloneqq \limsup_{t}L_{t}$ or $\Gamma^{\infty} \coloneqq \liminf_{t}U_{t}$ as appropriate.

Under appropriate conditions in both finite and infinite horizon settings, it is known (for example \cite{Ekstrom2008}, or this paper for the finite horizon case) that there is a c\`{a}dl\`{a}g $\mathbb{F}_{T}$-adapted process $V^{T}$ such that the random variable $V^{T}_{t}$ is the value of the game with payoff~\eqref{Definition:CostFunction-Infinite-Horizon}. In this section we prove that the deterministic (since $\mathcal{F}_{0}$ is trivial) mapping $T \mapsto V^{T}_{0}$ is continuous on $(0,\infty)$. This will be obtained as a straightforward consequence of recent results in \cite{Pham2013} on norm estimates for doubly reflected backward stochastic differential equations (DRBSDEs).
\subsection{Doubly reflected backward stochastic differential equations}
In order to motivate the discussion on DRBSDEs we make the following observations. By Theorem~\ref{Theorem:OptimalSwitchingProcesses}, we know that for each $T \in (0,\infty)$ given and fixed that there exist processes $Y^{0,T}$ and $Y^{1,T}$ belonging to $\mathcal{S}^{2}_{T} \cap \mathcal{Q}_{T}$ satisfying \eqref{eq:OptimalSwitchingProcesses}. Moreover, since $\psi \equiv 0$ it is also true that $Y^{0,T}$ and $Y^{1,T}$ are Snell envelopes of appropriate processes and are therefore supermartingales. Let $(M^{i,T},A^{i,T})$ denote the Meyer decomposition for $Y^{i,T}$, $i \in \{0,1\}$ (cf. \eqref{SnellEnvelope:MeyerDecomposition}). We note that both $M^{i,T}$ and $A^{i,T}$ belong to $\mathcal{S}^{2}_{T}$ since $Y^{i,T} \in \mathcal{S}^{2}_{T}$ and the filtration $\mathbb{F}_{T}$ is quasi-left-continuous. Using this decomposition, $Y^{i,T}_{T} = \Gamma_{i}$ and Brownian martingale representation for $M^{i,T}$, we have for all $t \in [0,T]$:
\begin{equation}\label{eq:DRBSDE-Solution-1}
Y^{i,T}_{t} = \Gamma^{i,T} - \int_{t}^{T}\zeta^{i,T}_{s}{d}B_{s} + A^{i,T}_{T} - A^{i,T}_{t} \quad \mathsf{P}\text{-a.s.}
\end{equation}
where $\zeta^{i,T} \in \mathcal{M}^{2}_{T}$ is predictable. Furthermore, one can also show (for example, Proposition B.11 of \cite{Kobylanski2012}) that
\begin{equation}\label{eq:DRBSDE-Flat-Off-Condition-1}
\int_{0}^{T}\left[Y^{i,T}_{t} - (Y^{1-i,T}_{t} - \gamma_{i,1-1}(t))\right]{d}A^{i,T}_{t} = 0 \quad \mathsf{P}\text{-a.s.}
\end{equation}
Recall from Theorem~\ref{Theorem:ExistenceOfSaddlePoint} that the process $V^{T} = (V^{T}_{t})_{0 \le t \le T}$ defined by $V^{T}_{t} = Y^{1,T}_{t} - Y^{0,T}_{t}$ solves the Dynkin game with payoff \eqref{Definition:CostFunction-Infinite-Horizon}. Recalling Definition~\ref{Definition:AuxSwitchingParameters}, Lemma~\ref{Lemma:MokobodskiCondition} and using \eqref{eq:DRBSDE-Solution-1}--\eqref{eq:DRBSDE-Flat-Off-Condition-1} above, we see that on $[0,T]$ the process $V^{T}$ satisfies
\begin{gather}\label{eq:DRBSDE-Solution-2}
\begin{split}
\begin{cases}
V^{T}_{t} = \Gamma^{T} - \int_{t}^{T}\zeta^{T}_{s}{d}B_{s} + K^{T}_{T} - K^{T}_{t} \\
L \le V^{T} \le U, \quad \left[V^{T}_{t} - L_{t}\right]{d}A^{1,T}_{t} = \left[U_{t} - V^{T}_{t}\right]{d}A^{0,T}_{t} = 0
\end{cases}\\
\text{where}\enskip \zeta^{T} \coloneqq  \zeta^{1,T} - \zeta^{0,T} \text{ and } K^{T} \coloneqq A^{1,T} - A^{0,T} \qquad\enskip
\end{split}
\end{gather}

We now introduce some notation and recall some results from \cite{Pham2013}. For $0 < T < \infty$ and $\mathbb{F}_{T}$-adapted c\`{a}dl\`{a}g processes $X$ and $X'$:
\begin{itemize}
	\item $\|X\|_{\mathcal{S}^{2}_{T}} \coloneqq \left(\mathsf{E}\left[(\sup_{0 \le t \le T}|X_{t}|)^{2}\right]\right)^{\frac{1}{2}}$
	\item For $0 \le t_{1} < t_{2} \le T$, $\bigvee_{t_{1}}^{t_{2}}X$ denotes the total variation of $X$ over $(t_{1},t_{2}]$
	\item $\|(X,X')\|_{\mathcal{S}^{2}_{T}} \coloneqq \left(\|X^{+}\|^{2}_{\mathcal{S}^{2}_{T}} + \|(X')^{-}\|^{2}_{\mathcal{S}^{2}_{T}}\right)^{\frac{1}{2}}$,
	where $X^{+}$ (resp. $(X')^{-}$) is the positive (resp.) negative part of $X$ (resp. $X'$).
	\item Letting $\hat{X}_{t} = \max(X_{t},X_{t^{-}})$, $\check{X}'_{t} = \min(X'_{t},X'_{t^{-}})$:
	\begin{align*}
	\|(X,X')\|^{2}_{T} \coloneqq {} & \sup_{\pi}\mathsf{E}\biggl[\biggl(\sum_{i=0}^{n-1}\bigl(\left[\mathsf{E}[\hat{X}_{\tau_{i+1}} \vert \mathcal{F}_{\tau_{i}}] - \check{X}'_{\tau_{i}} \right]^{+} \\
	& \qquad\qquad + \left[\hat{X}_{\tau_{i}} - \mathsf{E}[\check{X}'_{\tau_{i+1}}\vert \mathcal{F}_{\tau_{i}}]\right]^{+}\bigr) \biggr)^{2}\biggr] + \|(X,X')\|_{\mathcal{S}^{2}_{T}}^{2}
	\end{align*}
	where the supremum is taken over all stopping time partitions $\pi \colon 0 = \tau_{0} \le \ldots \le \tau_{n} = T$.
\end{itemize}
\begin{definition}\label{Definition:DRBSDE}
	Following \cite[p.~10]{Pham2013}, a (global) solution to the DRBSDE associated with a coefficient (or driver) $f(\omega,t,v,z) \colon \Omega \times [0,T] \times \mathbb{R} \times \mathbb{R} \to \mathbb{R}$, an $\mathcal{F}_{T}$-measurable terminal value $\Gamma^{T}$, and respective lower and upper barriers, $L$ and $U$, is a triple $(V,\zeta,K)$ of $\mathbb{F}_{T}$-progressively measurable processes satisfying
	\begin{gather}\label{eq:DRBSDE-Solution-3}
	\begin{cases}
	V_{t} = \Gamma^{T} + \int_{t}^{T}f(s,V_{s},\zeta_{s}){d}s - \int_{t}^{T}\zeta_{s}{d}B_{s} + K_{T} - K_{t} \\
	L \le V \le U, \quad \left[V_{t-} - L_{t-}\right]{d}A^{+}_{t} = \left[U_{t-} - V_{t-}\right]{d}A^{-}_{t} = 0
	\end{cases}
	\end{gather}
	where $V$ is c\`{a}dl\`{a}g, $K$ is a process of finite variation with orthogonal decomposition $K \coloneqq A^{+} - A^{-}$, and
	\[
	\|(V,\zeta,K)\|^{2}_{T} \coloneqq \mathsf{E}\left[\bigl(\sup_{0 \le t \le T}|V_{t}|\bigr)^{2} + \int_{0}^{T}|\zeta_{t}|^{2}{d}t + \bigl(\bigvee_{0}^{T}K\bigr)^{2}\right] < \infty.
	\]
\end{definition}
Recalling equation~\eqref{eq:DRBSDE-Solution-2} above and the properties of $(V^{T},\zeta^{T},K^{T})$, we see that the triple $(V^{T},\zeta^{T},K^{T})$ is a solution to the DRBSDE~\eqref{eq:DRBSDE-Solution-2} in the sense of Definition~\ref{Definition:DRBSDE}. Moreover, using Lemma~\ref{Lemma:MokobodskiCondition} (Mokobodski's hypothesis) and Theorem 3.4 of \cite{Pham2013} for instance, we also know that $(V^{T},\zeta^{T},K^{T})$ is, modulo indistinguishability, the unique solution to \eqref{eq:DRBSDE-Solution-2} in this instance.
\subsection{Dependence of solutions to DRBSDEs on the time horizon}\label{Section:DependenceOnTimeHorizonDiscussion}
Henceforth we only consider solutions to the DRBSDE~\eqref{eq:DRBSDE-Solution-3} with $f \equiv 0$. Let us fix $T \in (0,\infty)$ and let $\{T_{n}\}_{n \ge 0} \subset (0,\infty)$ be any sequence monotonically decreasing to $T$: $T_{n} \downarrow T$. We extend the unique solution $(V,\zeta,K)$ to \eqref{eq:DRBSDE-Solution-3} on $[0,T]$ to $(V^{T},\zeta^{T},K^{T})$ defined on $[0,T_{0}]$ in the following way: For each $t \in [0,T_{0}]$, 
\begin{equation}\label{eq:ExtensionsOfDRBSDEParameters}
V^{T}_{t} \coloneqq V_{t \wedge T},  \quad \zeta^{T}_{t} \coloneqq \zeta_{t \wedge T}\mathbf{1}_{\{t \le T\}}, \quad K^{T}_{t} \equiv A^{+,T}_{t} - A^{-,T}_{t} \text{ with } A^{\pm,T}_{t} \coloneqq A^{\pm}_{t \wedge T}
\end{equation}
Defining the respective lower and upper barriers $L^{T}$ and $U^{T}$ on $[0,T_{0}]$ by $L^{T}_{t} \coloneqq L_{t \wedge T}$ and $U^{T}_{t} \coloneqq U_{t \wedge T}$, it is straightforward to check that $(V^{T},\zeta^{T},K^{T})$ is the unique solution on $[0,T_{0}]$ to the DRBSDE
\begin{gather}\label{eq:DRBSDE-Extended-Solution}
\begin{cases}
V^{T}_{t} = \Gamma^{T} - \int_{t}^{T_{0}}\zeta^{T}_{s}{d}B_{s} + K^{T}_{T_{0}} - K^{T}_{t} \\
L^{T} \le V^{T} \le U^{T}, \quad \left[V^{T}_{t-} - L^{T}_{t-}\right]{d}A^{+,T}_{t} = \left[U^{T}_{t-} - V^{T}_{t-}\right]{d}A^{-,T}_{t} = 0
\end{cases}
\end{gather}
in the sense of Definition~\ref{Definition:DRBSDE} above.
\begin{assumption}\label{Assumption:ConvergenceOfTerminalValues}
	Suppose we are given a sequence $\{\Gamma^{T_{n}}\}_{n \ge 0}$ of random variables satisfying:
	\begin{itemize}
		\item Each $\Gamma^{T_{n}}$ is $\mathbb{F}_{T_{n}}$-measurable
		\item $L_{T_{n}} \le \Gamma^{T_{n}} \le U_{T_{n}}$
		\item $\Gamma^{T_{n}} \to \Gamma^{T}$ almost surely as $n \to \infty$
		\item $\sup_{n \ge 0}|\Gamma^{T_{n}}| \in L^{2}$
	\end{itemize}
\end{assumption}
Note that the last two conditions imply $\Gamma^{T_{n}} \to \Gamma^{T}$ in $L^{2}$ as $n \to \infty$. Let $(V^{T_{n}},\zeta^{T_{n}},K^{T_{n}})$ denote the unique solution on $[0,T_{n}]$ to the DRBSDE~\eqref{eq:DRBSDE-Solution-3}. We then extend these solutions to $[0,T_{0}]$ in the same way as before (see \eqref{eq:ExtensionsOfDRBSDEParameters}--\eqref{eq:DRBSDE-Extended-Solution}), with respective lower and upper barriers $L^{T_{n}}$ and $U^{T_{n}}$. We continue writing $(V^{T_{n}},\zeta^{T_{n}},K^{T_{n}})$ to denote these extensions to avoid excessive notation.

Define $\delta^{(n)}V \coloneqq (V^{T_{n}} - V^{T})$ and similarly for other cases. Theorem 3.5 of \cite{Pham2013} proves the following estimate:
\begin{align}\label{eq:DifferenceOf2RBSDEEstimate}
{} & \mathsf{E}\left[\sup_{0 \le t \le T_{0}}\left[|\delta^{(n)}V_{t}|^{2} + |\delta^{(n)}K_{t}|^{2}\right] + \int_{0}^{T_{0}}|\delta^{(n)}\zeta_{t}|^{2}{d}t\right]
\nonumber \\
\le {} & C\mathsf{E}[|\delta^{(n)}\Gamma|^{2}] + C\biggl(\mathsf{E}[|\Gamma^{T}|^{2} + |\Gamma^{T_{n}}|^{2}] + \|(L^{T_{n}},U^{T_{n}})\|_{T_{0}} \nonumber \\
& + \|(L^{T},U^{T})\|_{T_{0}}\biggr)\left(\mathsf{E}\left[\sup_{0 \le t \le T_{0}}\left[|\delta^{(n)}L_{t}|^{2} + |\delta^{(n)}
U_{t}|^{2}\right]\right]\right)^{\frac{1}{2}}
\end{align}
where $C$ is a positive constant.
\subsection{Dependence of the value of the Dynkin game on the time horizon}
We now return to the theme of this section, which is to show $T \mapsto V^{T}_{0}$ is continuous on $(0,\infty)$. For this it suffices to show that for every $T \in (0,\infty)$ and arbitrary sequence $\{T_{n}\}_{n \ge 0} \subset (0,\infty)$ satisfying $T_{n} \to T$, that $V^{T_{n}}_{0} \to V^{T}_{0}$ with $V^{T_{n}}$ (resp. $V^{T}$) denoting the unique solution to \eqref{eq:DRBSDE-Solution-3} with $f \equiv 0$ and time horizon $[0,T_{n}]$ (resp. $[0,T]$), and where convergence takes place in the usual Euclidean sense. We argue by showing $T \mapsto V^{T}_{0}$ is right-continuous and left-continuous at each point in $(0,\infty)$, noting further that it is sufficient to prove this sequential convergence for monotone sequences $\{T_{n}\}_{n \ge 0} \subset (0,\infty)$. We only show that $T \mapsto V^{T}_{0}$ is right-continuous since the other case follows by similar reasoning.
\begin{theorem}\label{Theorem:Right-Continuous-Function-Of-The-Time-Horizon}
	Let $T \in (0,\infty)$ be arbitrary and $\{T_{n}\}_{n \ge 0} \subset (0,\infty)$ be any sequence satisfying $T_{n} \downarrow T$. Let $D_{0,T}(\cdot,\cdot)$ (resp. $D_{0,T_{n}}(\cdot,\cdot)$) be the payoff \eqref{Definition:CostFunction-Infinite-Horizon} for the Dynkin game with horizon $[0,T]$ (resp. $[0,T_{n}]$). Suppose the terminal values $\Gamma^{T}$ and $\{\Gamma^{T_{n}}\}_{n \ge 0}$ in these respective payoffs satisfy Assumption~\ref{Assumption:ConvergenceOfTerminalValues}. Then, letting $V^{T}_{0}$ and $\{V^{T_{n}}_{0}\}_{n \ge 0}$ denote the values for these games (which exist by Theorem~\ref{Theorem:ExistenceOfSaddlePoint}), we have
	\begin{equation}\label{eq:ConvergenceOfGameValues}
	\lim_{n \to \infty}|V^{T_{n}}_{0} - V^{T}_{0}|^{2} = 0 
	\end{equation}
	and the map $T \mapsto V^{T}_{0}$ is therefore right-continuous on $(0,\infty)$.
\end{theorem}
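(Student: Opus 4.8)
The plan is to read \eqref{eq:ConvergenceOfGameValues} off the a priori estimate \eqref{eq:DifferenceOf2RBSDEEstimate}. Fix the sequence $T_{n} \downarrow T$ and extend every datum to the common horizon $[0,T_{0}]$ exactly as in \eqref{eq:ExtensionsOfDRBSDEParameters}--\eqref{eq:DRBSDE-Extended-Solution}. By Theorem~\ref{Theorem:ExistenceOfSaddlePoint} together with the discussion preceding Definition~\ref{Definition:DRBSDE}, for each horizon the game value is the time-$0$ value of the unique solution to the corresponding DRBSDE with $f \equiv 0$; thus $V^{T_{n}}$ and $V^{T}$, extended to $[0,T_{0}]$, are exactly the processes whose difference $\delta^{(n)}V$ is controlled by \eqref{eq:DifferenceOf2RBSDEEstimate}. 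Since $\mathcal{F}_{0}$ is trivial, $V^{T_{n}}_{0}$ and $V^{T}_{0}$ are deterministic and $|V^{T_{n}}_{0} - V^{T}_{0}|^{2} = \mathsf{E}\bigl[|\delta^{(n)}V_{0}|^{2}\bigr] \le \mathsf{E}\bigl[\sup_{0 \le t \le T_{0}}|\delta^{(n)}V_{t}|^{2}\bigr]$, so it suffices to show that the right-hand side of \eqref{eq:DifferenceOf2RBSDEEstimate} converges to $0$ as $n \to \infty$.

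Two of the three factors there are easy. The term $\mathsf{E}\bigl[|\delta^{(n)}\Gamma|^{2}\bigr]$ is the squared $L^{2}$-distance between $\Gamma^{T_{n}}$ and $\Gamma^{T}$, which tends to $0$ by the dominated convergence argument recorded just after Assumption~\ref{Assumption:ConvergenceOfTerminalValues} (a.s.\ convergence plus $\sup_{m}|\Gamma^{T_{m}}| \in L^{2}$). The factor $\bigl(\mathsf{E}\bigl[\sup_{0 \le t \le T_{0}}(|\delta^{(n)}L_{t}|^{2} + |\delta^{(n)}U_{t}|^{2})\bigr]\bigr)^{1/2}$ also vanishes: $\delta^{(n)}L_{t} = L_{t \wedge T_{n}} - L_{t \wedge T}$ with $|t \wedge T_{n} - t \wedge T| \le T_{n} - T \to 0$, and under the standing assumption of Section~\ref{Section:Continuous-Dependence} the paths of $L$ and $U$ are $\mathsf{P}$-a.s.\ continuous, hence uniformly continuous on the compact interval $[0,T_{0}]$, so $\sup_{0 \le t \le T_{0}}|\delta^{(n)}L_{t}| \to 0$ a.s.; since $\sup_{0 \le t \le T_{0}}|\delta^{(n)}L_{t}|^{2} \le 4\sup_{0 \le s \le T_{0}}|L_{s}|^{2} \in L^{1}$ (as $L \in \mathcal{S}^{2}$), dominated convergence upgrades this to $L^{2}$-convergence, and the argument for $U$ is identical.

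The step I expect to be the real obstacle is showing that the remaining prefactor $\mathsf{E}[|\Gamma^{T}|^{2} + |\Gamma^{T_{n}}|^{2}] + \|(L^{T_{n}},U^{T_{n}})\|_{T_{0}} + \|(L^{T},U^{T})\|_{T_{0}}$ is bounded uniformly in $n$. The $\Gamma$-terms are at most $2\mathsf{E}[\sup_{m}|\Gamma^{T_{m}}|^{2}] < \infty$, so everything reduces to $\|(L^{S},U^{S})\|_{T_{0}} \le \|(L,U)\|_{T_{0}}$ for every $S \in \{T\} \cup \{T_{n} : n \ge 0\}$ (which handles both the $T_{n}$- and the $T$-terms) and $\|(L,U)\|_{T_{0}} < \infty$. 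The latter holds by Mokobodski's hypothesis (Lemma~\ref{Lemma:MokobodskiCondition}), as in the discussion preceding Definition~\ref{Definition:DRBSDE}. For the former I would unwind the definition of $\|(\cdot,\cdot)\|_{T_{0}}$, using that $\hat{X} = X$ and $\check{X}' = X'$ by path-continuity, and invoke two elementary facts: (i) stopping at $S$ never increases a running supremum, so the $\mathcal{S}^{2}_{T_{0}}$-part of $\|(L^{S},U^{S})\|_{T_{0}}$ is at most that of $\|(L,U)\|_{T_{0}}$; and (ii) in any stopping-time partition $0 = \tau_{0} \le \cdots \le \tau_{k} = T_{0}$, every increment whose left endpoint satisfies $\tau_{i} \ge S$ has $L^{S}_{\tau_{i}} = L^{S}_{\tau_{i+1}} = L_{S}$ and $U^{S}_{\tau_{i}} = U^{S}_{\tau_{i+1}} = U_{S}$, which (as $L_{S},U_{S}$ are $\mathcal{F}_{S}$-measurable with $\mathcal{F}_{S} \subseteq \mathcal{F}_{\tau_{i}}$, and $L \le U$ by \eqref{eq:NoArbitrage}) makes that increment equal to $2[L_{S} - U_{S}]^{+} = 0$; consequently each partition sum for $(L^{S},U^{S})$ over $[0,T_{0}]$ coincides with a partition sum for $(L,U)$ over $[0,S]$, and since appending $T_{0}$ to a partition of $[0,S]$ only adds non-negative increments, the partition supremum is non-decreasing in the horizon. (Alternatively, the $S$-stopped supermartingales $Y^{0,T_{0}}_{\cdot \wedge S}$ and $Y^{1,T_{0}}_{\cdot \wedge S}$ sandwich $L^{S}$ and $U^{S}$ on $[0,T_{0}]$ with $\mathcal{S}^{2}_{T_{0}}$-norms independent of $S$, which can be fed into the norm estimates of \cite{Pham2013}.)

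Granting the uniform bound, the right-hand side of \eqref{eq:DifferenceOf2RBSDEEstimate} is a null sequence plus a bounded sequence times a null sequence, hence tends to $0$; therefore $\mathsf{E}\bigl[\sup_{0 \le t \le T_{0}}|\delta^{(n)}V_{t}|^{2}\bigr] \to 0$ and in particular $|V^{T_{n}}_{0} - V^{T}_{0}|^{2} \to 0$, which is \eqref{eq:ConvergenceOfGameValues}. As the sequence $T_{n} \downarrow T$ and the point $T \in (0,\infty)$ were arbitrary, $T \mapsto V^{T}_{0}$ is right-continuous on $(0,\infty)$.
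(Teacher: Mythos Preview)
Your proof is correct and follows the same route as the paper's: apply the estimate \eqref{eq:DifferenceOf2RBSDEEstimate}, bound the prefactor uniformly in $n$ via $\|(L^{S},U^{S})\|_{T_{0}} \le \|(L,U)\|_{T_{0}} < \infty$, and show that the barrier-difference and $\Gamma$-difference terms vanish. The only cosmetic differences are that the paper uses monotone rather than dominated convergence for the barrier term and cites Theorem~3.4 of \cite{Pham2013} directly for finiteness of $\|(L,U)\|_{T_{0}}$, whereas you supply more detail on the norm-monotonicity step that the paper leaves as ``not difficult to see''.
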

\begin{proof}
	From the discussion in Section~\ref{Section:DependenceOnTimeHorizonDiscussion} above, we can assert that there exists a positive constant $C$ such that (cf. \eqref{eq:DifferenceOf2RBSDEEstimate}):
	\begin{align}\label{eq:DifferenceBetweenGameValuesEstimate-1}
	|V^{T_{n}}_{0} - V^{T}_{0}|^{2} \le {} & C\mathsf{E}[|\delta^{(n)}\Gamma|^{2}] + C\biggl(\mathsf{E}[|\Gamma^{T}|^{2} + |\Gamma^{T_{n}}|^{2}] + \|(L^{T_{n}},U^{T_{n}})\|_{T_{0}} + \|(L^{T},U^{T})\|_{T_{0}}\biggr) \nonumber \\
	& \qquad \qquad \times \left(\mathsf{E}\left[\sup_{0 \le t \le T_{0}}\left[|\delta^{(n)}L_{t}|^{2} + |\delta^{(n)}
	U_{t}|^{2}\right]\right]\right)^{\frac{1}{2}}
	\end{align}
	Note that $\mathsf{E}[|\Gamma^{T_{n}}|^{2}]$ is uniformly bounded in $n$ since $\sup_{n \ge 0}|\Gamma^{T_{n}}| \in L^{2}$ by Assumption~\ref{Assumption:ConvergenceOfTerminalValues}. Theorem 3.4 of \cite{Pham2013} verifies that the norm $\|(L,U)\|_{T_{0}}$ is finite, and it is not difficult to see that $\|(L^{T},U^{T})\|_{T_{0}} \le \|(L^{T_{n}},U^{T_{n}})\|_{T_{0}} \le \|(L,U)\|_{T_{0}}$ for every $n$. Using this in \eqref{eq:DifferenceBetweenGameValuesEstimate-1} shows that we have
	\begin{align}\label{eq:DifferenceBetweenGameValuesEstimate-2}
	|V^{T_{n}}_{0} - V^{T}_{0}|^{2} \le {} & C\mathsf{E}[|\delta^{(n)}\Gamma|^{2}] + C\biggl(\mathsf{E}[|\Gamma^{T}|^{2} + \sup_{n \ge 0}|\Gamma^{T_{n}}|^{2}] + 2\|(L,U)\|_{T_{0}}\biggr) \nonumber \\
	& \qquad \qquad \qquad \times \left(\mathsf{E}\left[\sup_{0 \le t \le T_{0}}\left[|\delta^{(n)}L_{t}|^{2} + |\delta^{(n)}
	U_{t}|^{2}\right]\right]\right)^{\frac{1}{2}}
	\end{align}
	and the right-hand side of \eqref{eq:DifferenceBetweenGameValuesEstimate-2} is finite for all $n \ge 0$. We have
	\[
	\sup_{0 \le t \le T_{0}}\left[|\delta^{(n)}L_{t}|^{2} + |\delta^{(n)}
	U_{t}|^{2}\right] = \sup_{T \le t \le T_{n}}\left[|L_{t} - L_{T}|^{2} + |U_{t} - U_{T}|^{2}\right]
	\]
	which decreases monotonically to $0$ almost surely as $n \to \infty$. By making use of the Monotone Convergence Theorem and $\lim_{n \to \infty}\mathsf{E}[|\delta^{(n)}\Gamma|^{2}] = 0$ by Assumption~\ref{Assumption:ConvergenceOfTerminalValues}, passing to the limit $n \to \infty$ in \eqref{eq:DifferenceBetweenGameValuesEstimate-2} gives
	\[
	0 \le \liminf_{n \to \infty}|V^{T_{n}}_{0} - V^{T}_{0}|^{2} \le \limsup_{n \to \infty}|V^{T_{n}}_{0} - V^{T}_{0}|^{2} \le 0
	\]
	and the claim follows.
\end{proof}
\section{Numerical examples}\label{Section:Examples}
\subsection{Cancellable call and put options}
In this section we use the same probabilistic setup as Section~\ref{Section:Continuous-Dependence} above. We assume a Black-Scholes market with constant risk-free rate of interest $r > 0$ and risky asset price process $S = \left(S_{t}\right)_{t \ge 0}$ which satisfies
\begin{equation}\label{eq:ExponentialBrownianMotion}
S_{t} = S_{0}\exp\left(\left(r - \tfrac{\rho^{2}}{2}\right)t + \rho B_{t}\right),\quad t \ge 0
\end{equation}
where $S_{0} > 0$ and $\rho > 0$ are constants. A call (resp. put) option on the underlying asset $S$ with finite expiration $T > 0$ is a contingent claim that gives the holder the right, but not the obligation, to buy (resp. sell) the asset $S$ at a predetermined strike price $K$ by time $T$. If this option is of ``American'' style, then the holder can exercise this right at any time $\tau \in [0,T]$. The payoff $G(S_{\tau})$ of the option when exercised at time $\tau \in [0,T]$ is given by:
\begin{equation}\label{eq:OptionPayOff}
G(S_{\tau}) = \begin{cases}
(S_{\tau} - K)^{+} \quad \text{for a call option} \\
(K-S_{\tau})^{+} \quad \text{for a put option}
\end{cases}
\end{equation}

A cancellable (game) version of the option grants the writer the ability to cancel it at a premature time $0 \le \sigma < T$. If the writer decides to exercise this right, then the option holder receives the payoff of the standard option plus an additional amount $\delta > 0$, which is a penalty imposed on the writer for terminating the contract early. The expected value of the cash flow from the writer to the seller at time $0$ is given by:
\begin{equation}\label{Definition:GameCallOption}
D_{0,T}(\sigma,\tau) = \mathsf{E}\bigl[e^{-r \sigma}\left(G(S_{\sigma}) + \delta\right)\mathbf{1}_{\lbrace \sigma < \tau \rbrace}\mathbf{1}_{\lbrace \sigma < T \rbrace} + e^{-r \tau}G(S_{\tau})\mathbf{1}_{\lbrace \tau \le \sigma \rbrace}\bigr]
\end{equation}
The holder of the contract would like to choose the exercise time $\tau$ to maximise the payoff. On the other hand, the writer would like to minimise this payoff by choosing the appropriate cancellation time $\sigma$. We assume that $\sigma$ and $\tau$ are chosen from the set $\mathcal{T}_{0,T}$ of stopping times.

Equation~\eqref{Definition:GameCallOption} is the payoff for a Dynkin game between the option writer and holder (albeit slightly different to~\eqref{Definition:CostFunction} above). The assumptions listed in Section~\ref{Section:PreliminaryAssumptions} can be verified for this game, and an inspection of the proof of Theorem~\ref{Theorem:ExistenceOfSaddlePoint} shows that its conclusion remains valid for the payoff~\eqref{Definition:GameCallOption}. The cancellable call/put option can therefore be valued using optimal switching.

\subsection{Approximation procedure}
Suppose we are additionally given an integer $0 < M < \infty$ and an increasing sequence of times $\{t_{m}\}_{m=0}^{M} \subset [0,T]$ satisfying $t_{0} = 0$ and $t_{M} = T$. Set $\hat{\mathbb{F}} = \{\mathcal{F}_{t_{m}}\}_{m = 0}^{M}$ and for each $t_{m}$ and $i \in \{0,1\}$, let $\hat{\mathcal{A}}^{(M)}_{t_{m},i} \subset \mathcal{A}_{t_{m},i}$ be the subclass of controls $\alpha = (\tau_{n},\iota_{n})_{\ge 0}$ where each $\tau_{n}$ takes values in $\{t_{m},\ldots,t_{M}\}$ and satisfies $\mathsf{P}\left(\{\tau_{n} < T \} \cap \{\tau_{n} = \tau_{n+1}\}\right) = 0$ for $n \ge 1$. Our discrete-time approximation to the auxiliary optimal switching problem starting in mode $i \in \{0,1\}$ at time $t_{m}$ takes a similar form as \eqref{Definition:SwitchingControlObjective} (with $\psi_{1} = \psi_{0} = 0$ for simplicity): $\alpha \in \hat{\mathcal{A}}^{(M)}_{t_{m},i}$,
\begin{equation*}\label{eq:Discretised-Optimal-Switching}
\hat{J}^{(M)}(\alpha;t_{m},i) = \mathsf{E}\bigl[\Gamma_{\iota_{N(\alpha)}} - \sum\nolimits_{n \ge 1}\gamma_{\iota_{n-1},\iota_{n}}(\tau_{n})\mathbf{1}_{\lbrace \tau_{n} < T \rbrace} \bigm\vert \mathcal{F}_{t_{m}}\bigr]
\end{equation*}
where $\iota_{N(\alpha)}$ is the last mode switched to before $T$ under the control $\alpha$. The results of \cite{Martyr2014a} show that there exist $\hat{\mathbb{F}}$-adapted sequences $\hat{Y}^{(M),i} = \{\hat{Y}^{(M),i}_{m}\}_{m = 0}^{M}$, $i \in \{0,1\}$, defined by
\begin{equation}\label{eq:Discretised-Backward-Induction}
\begin{split}
\hat{Y}^{(M),i}_{M} & = \Gamma_{i},\hspace{1 em}\text{ and for } m = M - 1,\ldots,0 : \\
\hat{Y}^{(M),i}_{m} & = \max\limits_{j \in \{0,1\}} \left\lbrace -\gamma_{i,j}(t_{m}) + \mathsf{E}\left[\hat{Y}^{(M),j}_{m+1} \bigm\vert \mathcal{F}_{t_{m}}\right] \right\rbrace
\end{split}
\end{equation}
such that $\max_{m \in \{0,\ldots,M\}}\big|\hat{Y}^{(M),i}_{m}\big| \in L^{2}$ and $\hat{Y}^{(M),i}_{m} = \esssup_{\alpha \in \hat{\mathcal{A}}^{(M)}_{t_{m},i}}\hat{J}^{(M)}(\alpha;t_{m},i)$ $\mathsf{P}$-a.s.

For each $M = 1,2,\ldots,$ define $\hat{V}^{(M)} = \{\hat{V}^{(M)}_{m}\}_{m = 0}^{M}$ by $\hat{V}^{(M)}_{m} \coloneqq \hat{Y}^{(M),1}_{m} - \hat{Y}^{(M),0}_{m}$ and recall the particular parametrization given in Definition~\ref{Definition:AuxSwitchingParameters}. Recalling Theorem~\ref{Theorem:ExistenceOfSaddlePoint}, we see that the random variable $\hat{V}^{(M)}_{m}$ can be used to approximate the value of the continuous-time Dynkin game with payoff $D_{t_{m}}(\cdot,\cdot)$ (cf. \eqref{Definition:CostFunction}). There is, however, a more efficient backward induction formula for $\hat{V}^{(M)}$. For $m = M-1,\ldots,0$ and $i \in \{0,1\}$ define events $\mathcal{C}^{i}_{m}$ and $\mathcal{D}^{i}_{m}$ as follows:
\begin{equation}\label{eq:ProofOfGameBackwardInductionFormula-Events}
\begin{split}
\mathcal{C}^{i}_{m} & \coloneqq \left\{\hat{Y}^{(M),i}_{m} =  \mathsf{E}\left[\hat{Y}^{(M),i}_{m+1} \bigm\vert \mathcal{F}_{t_{m}}\right] \right\}\\
\mathcal{D}^{i}_{m} & \coloneqq \left\{\hat{Y}^{(M),i}_{m} = -\gamma_{i,1-i}(t_{m}) + \mathsf{E}\left[\hat{Y}^{(M),1-i}_{m+1} \bigm\vert \mathcal{F}_{t_{m}}\right]\right\}
\end{split}
\end{equation}
Notice that $\mathsf{P}(\mathcal{C}^{i}_{m} \cup \mathcal{D}^{i}_{m}) = 1$ for every $i \in \{0,1\}$ and $m = M-1,\ldots,0$. It is not difficult to verify (using Assumption~\ref{Assumption:StoppingCostsTerminalData} and optimality arguments -- see \cite{Martyr2014a}) that $\mathsf{P}(\mathcal{D}^{0}_{m} \cap \mathcal{D}^{1}_{m}) = 0$ for $m = M-1,\ldots,0$ and this leads to: $\mathsf{P}-a.s.$,
\begin{align}
\hat{Y}^{(M),i}_{m}\mathbf{1}_{\mathcal{D}^{1-i}_{m}} & = \mathsf{E}\left[\hat{Y}^{(M),i}_{m+1} \bigm\vert \mathcal{F}_{t_{m}}\right]\mathbf{1}_{\mathcal{D}^{1-i}_{m}} \label{eq:ProofOfGameBackwardInductionFormula-OneStepMartingale}\\
\hat{Y}^{(M),1}_{m} - \hat{Y}^{(M),0}_{m} & = (\hat{Y}^{(M),1}_{m} - \hat{Y}^{(M),0}_{m})\left(\sum_{i=0}^{1}\mathbf{1}_{\mathcal{D}^{i}_{m} \cap \mathcal{C}^{1-i}_{m}} + \mathbf{1}_{\mathcal{C}^{0}_{m} \cap \mathcal{C}^{1}_{m}}\right) \label{eq:ProofOfGameBackwardInductionFormula-ConditionOnEvents}
\end{align}
Using $\hat{V}^{(M)}_{m} = \hat{Y}^{(M),1}_{m} - \hat{Y}^{(M),0}_{m}$, equations~\eqref{eq:ProofOfGameBackwardInductionFormula-OneStepMartingale} and \eqref{eq:ProofOfGameBackwardInductionFormula-ConditionOnEvents}, definition~\eqref{eq:ProofOfGameBackwardInductionFormula-Events} for the events $\mathcal{C}^{i}_{m}$ and $\mathcal{D}^{i}_{m}$, and the backward induction formula \eqref{eq:Discretised-Backward-Induction}, one can show that $\hat{V}^{(M)}$ satisfies: $\mathsf{P}$-a.s.,
\[
\begin{split}
\hat{V}^{(M),i}_{M} & = \Gamma,\hspace{1 em}\text{ and for } m = M - 1,\ldots,0 : \\
\hat{V}^{(M),i}_{m} & = \min\left(\gamma_{-}(t_{m}),\max\left(-\gamma_{+}(t_{m}), \mathsf{E}\left[\hat{V}^{(M),i}_{m+1} \bigm\vert \mathcal{F}_{t_{m}}\right]\right)\right)
\end{split}
\]
In order to account for exponential discounting, assuming that the rewards and costs have not already been discounted, the backward induction formula should be written as:
\begin{equation}\label{eq:Discretised-Backward-Induction-Game-Value}
\begin{split}
\hat{V}^{(M),i}_{M} & = \Gamma,\hspace{1 em}\text{ and for } m = M - 1,\ldots,0 : \\
\hat{V}^{(M),i}_{m} & = \min\left(\gamma_{-}(t_{m}),\max\left(-\gamma_{+}(t_{m}), \mathsf{E}\left[e^{-r(t_{m+1} - t_{m})} \cdot \hat{V}^{(M),i}_{m+1} \bigm\vert \mathcal{F}_{t_{m}}\right]\right)\right)
\end{split}
\end{equation}
The reader can compare the backward induction formula~\eqref{eq:Discretised-Backward-Induction-Game-Value} to the one appearing in Theorem~2.1 of \cite{Kifer2000}. In a Markovian setting, the Least-Squares Monte Carlo regression (LSMC) method (Chapter 8, Section 6 of \cite{Glasserman2003}) can be used to numerically approximate the conditional expectation in \eqref{eq:Discretised-Backward-Induction-Game-Value}.

\subsection{Numerical results for the cancellable call and put options}
We now present numerical results for the cancellable call and put options. The backward induction formula~\eqref{eq:Discretised-Backward-Induction-Game-Value} with the LSMC algorithm was used to this effect, with simple monomials of degree $2$ used to approximate the conditional expectations. For each run of the algorithm, $10000$ sample paths $\{\hat{S}_{m}\}_{m=0}^{M}$ of the geometric Brownian motion \eqref{eq:ExponentialBrownianMotion} were simulated using antithetic sampling and the relation:
\begin{equation*}
\begin{cases}
\hat{S}_{0} = S_{0} \\
\hat{S}_{m+1} = \hat{S}_{m}\exp\bigl([r-\frac{\rho^{2}}{2}]h + \rho \sqrt{h} \cdot \xi_{m+1}\bigr),\quad m = 0,\ldots,M-1
\end{cases}
\end{equation*}
where $h = \frac{T}{M}$ is the step size and $\{\xi_{m}\}_{m = 1}^{M}$ is a sequence of I.I.D. standard normal random variables. The option's value was set to the empirical average of the results from 100 runs of the algorithm.

The same model parameters were used to value the cancellable call and put options. These parameters were obtained from \cite[p.~128]{Kuhn2007} and are as follows: $r = 0.06$, $\rho = 0.4$, $K = 100$ and $\delta = 5$. We computed option values on a finite time horizon with $T = 0.5 \times 2^{q}$, $q = 0,\ldots,8$, initial spot price $S_{0} \in \{60, 140\}$, and $M = 1000$ time steps.

\subsubsection{Numerical results for the cancellable call option.}
Figure~\ref{figure:cancellable_call_solution} below shows numerical results for the option values for $S_{0} \in \{60, 140\}$. The solid line shows finite horizon option values whilst the dotted line is the perpetual option's value. The latter was calculated using the following formula obtained from \cite{Ekstrom2006}:
\[
V^{\infty}_{0} = \begin{cases}
\delta\frac{S_{0}}{K}, & \text{ if } S_{0} \in [0,K] \\
S_{0} - K + \delta, & \text{ if } S_{0} \in (K,\infty)
\end{cases}
\]

\begin{figure}[h!]
	\centering
	\includegraphics*[width=0.8\textwidth,height=0.2\textheight]{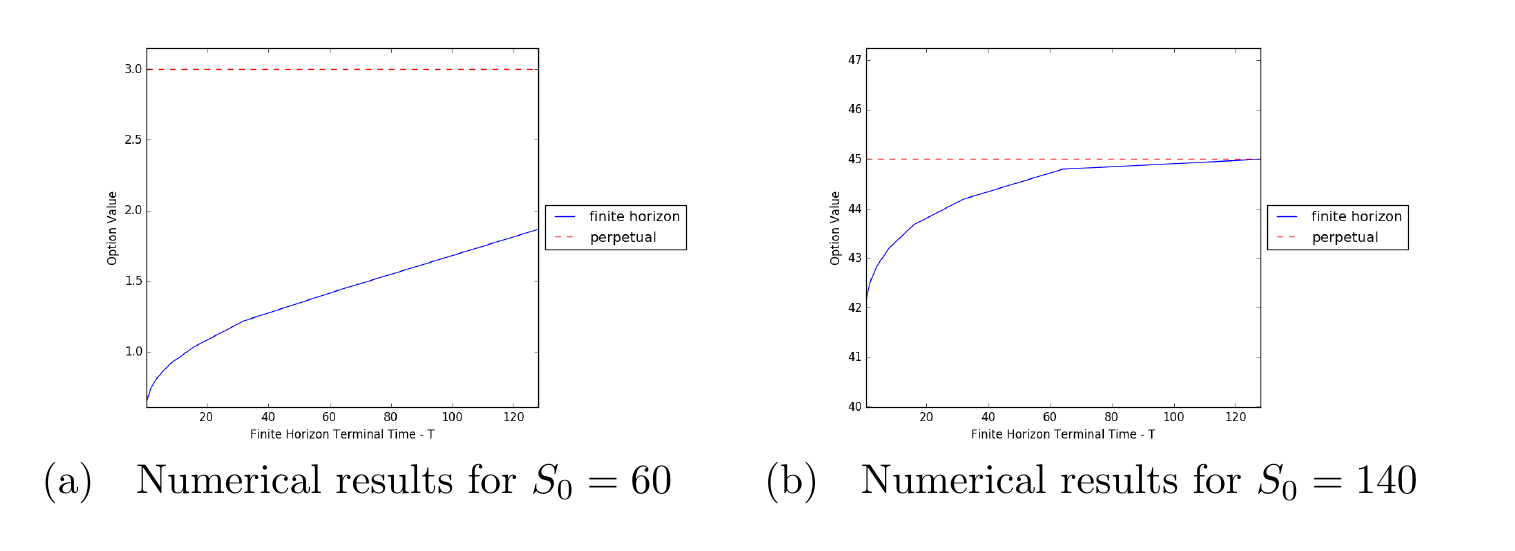}
	\caption{Finite and infinite horizon cancellable call option values for $S_{0} \in \{60, 140\}$.}
	\label{figure:cancellable_call_solution}
\end{figure}

For both cases shown in Figure~\ref{figure:cancellable_call_solution}, the finite horizon option values appear to be continuous with respect to the time horizon $T$. Furthermore, in Figure~\ref{figure:cancellable_call_solution}-(b), the option values apparently converge to the perpetual option's value as $T \to \infty$.

\subsubsection{Numerical results for the cancellable put option.}
\hspace{0em}\\
\begin{figure}[h!]
	\centering
	\includegraphics*[width=0.8\textwidth,height=0.2\textheight]{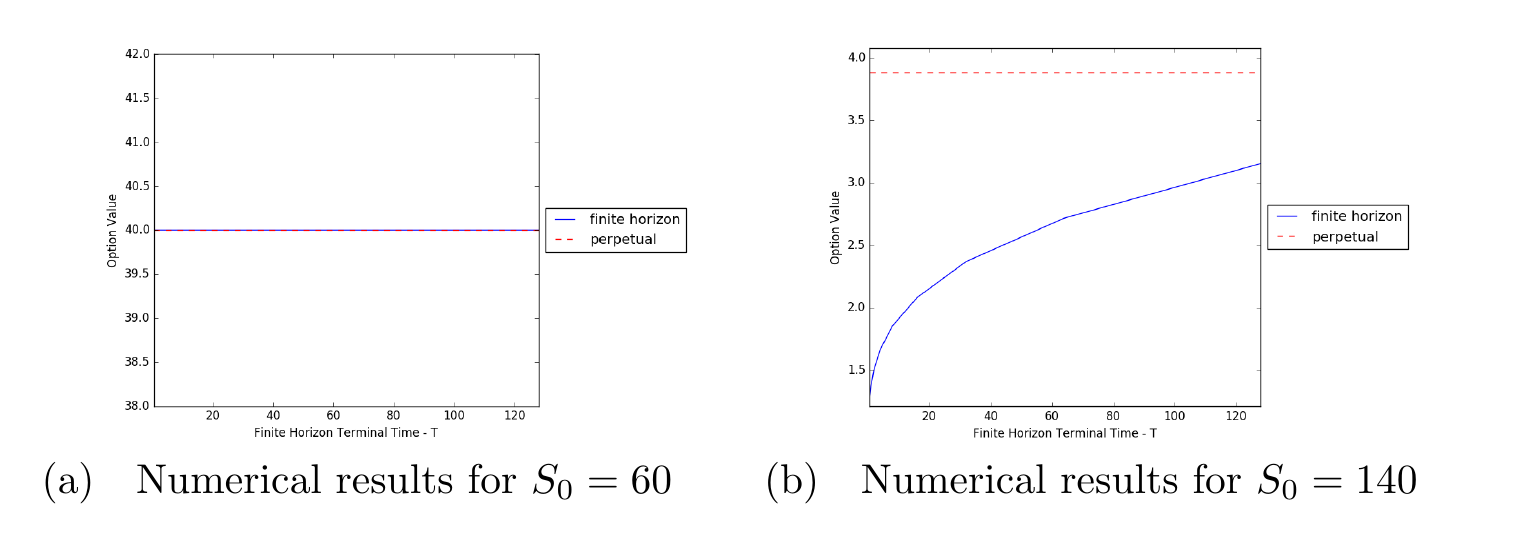}
	\caption{Finite and infinite horizon cancellable put option values for $S_{0} \in \{60, 140\}$.}
	\label{figure:cancellable_put_solution}
\end{figure}

Figure~\ref{figure:cancellable_put_solution} provides the analogous illustrations for the cancellable put option. The perpetual option's value in this case was calculated using the following formula obtained from \cite{Kyprianou2004}:
\begin{align*}
i. \quad & \delta \ge \delta^{*}: V^{\infty}_{0} = V^{AP}(S_{0}) \\
ii. \quad & \delta < \delta^{*}: V^{\infty}_{0} = \begin{cases}
K - S_{0},& \text{ if } S_{0} \in (0,k^{*}] \\
(K - k^{*})(\frac{S_{0}}{k^{*}})^{-(\gamma-1)}\frac{(\frac{S_{0}}{K})^{\gamma} - (\frac{S_{0}}{K})^{-\gamma}}{(\frac{k^{*}}{K})^{\gamma} - (\frac{k^{*}}{K})^{-\gamma}} \\
\quad +~\delta(\frac{S_{0}}{K})^{-(\gamma-1)}\frac{(\frac{S_{0}}{k^{*}})^{-\gamma} - (\frac{S_{0}}{k^{*}})^{\gamma}}{(\frac{k^{*}}{K})^{\gamma} - (\frac{k^{*}}{K})^{-\gamma}},&\text{ if } S_{0} \in (k^{*},K) \\
\delta(\frac{S_{0}}{K})^{-(2\gamma-1)},&\text{ if } S_{0} \in [K,\infty)
\end{cases}
\end{align*}
where $\gamma = \frac{r}{\rho^{2}} + \frac{1}{2}$, $S \mapsto V^{AP}(S)$ is the time $0$ value for the perpetual American put option as a function of the initial asset price, $\delta^{*} = V^{AP}(K)$, and $\frac{k^{*}}{K}$ is the solution in $(0,1)$ to the following equation:
\[
y^{2\gamma} + 2\gamma - 1 = 2\gamma\left(1 + \frac{\delta}{K}\right)y
\]
For the interested reader, we note that $\delta^{*} = V^{AP}(100) \approxeq 30.3$ and $k^{*} \approxeq 69.9$ to one decimal place. This means $V^{\infty}_{0} = K - S_{0}$ when $S_{0} = 60$ and $V^{\infty}_{0} = \delta(\frac{S_{0}}{K})^{-(2\gamma-1)}$ when $S_{0} = 140$. In terms of continuity of $T \mapsto V^{T}_{0}$ and possible convergence to the perpetual option value, from Figure~\ref{figure:cancellable_put_solution} one draws similar conclusions to those for the cancellable call option.
\section{Conclusion}
This paper showed how the solution to a two-mode optimal switching problem can be used to derive the solution to a Dynkin game in continuous-time and on a finite time horizon $[0,T]$. Under certain hypotheses, the value $V_{t}$ of the Dynkin game starting from $t \ge 0$ exists and satisfies $V_{t} = Y^{1}_{t} - Y^{0}_{t}$, where $Y^{1}_{t}$ and $Y^{0}_{t}$ are the respective optimal values for the optimal switching problem with initial mode $1$ and $0$. Furthermore, $(Y^{1}_{t})_{0 \le t \le T}$ and $(Y^{0}_{t})_{0 \le t \le T}$ (and therefore $V = (V_{t})_{0 \le t \le T}$) are right-continuous processes, and a Nash equilibrium solution to the Dynkin game can be constructed using appropriate debut times of $V$. Results on doubly reflected stochastic differential equations were used to prove that the value of the game is a continuous function of the time horizon parameter $T$. This result was confirmed via numerical experiments for cancellable call and put options.

% Appendix here
% Options are (1) APPENDIX (with or without general title) or 
%             (2) APPENDICES (if it has more than one unrelated sections)
% Outcomment the appropriate case if necessary
%
%\appendix

\section*{Acknowledgments}
% Enter the text of acknowledgments here
This research was partially supported by EPSRC grant EP/K00557X/1. The author would like to thank his PhD supervisor J. Moriarty, colleague T. De Angelis, Prof. S. Hamad\`{e}ne, and all others whose comments which led to an improved draft of the paper.

%%%%%%%%%%%%%%%%%%%%%%%%%%%%%%%%%%%%%%%%%%%%%%%%%%%%%%%%%%%%%%%%%%%%%%%%%%%%%%%%%%%%%%%%%%%%%%%%%%%%%%%%%%%%%%%%%%%%%%%%%%%%%%%%%%%%%%%%%%%%%%%%%%%%%%%%%%%%%%%%%%%%%%%%%%%%%%%%%%%%%%%%%%%%%%%%%%%%%%%%%%%%%%%%%%%%%%%%%%%%%%%%%%%%%%%%%%%%%%%%%%%%%%%%%%%%%%%%%%%%%%%%%%%%%%%%

%\bibliographystyle{hsiam} % outcomment this and next line in Case 1
%\bibliography{../../library} % if more than one, comma separated

\begin{thebibliography}{10}
	
	\bibitem{Djehiche2009}
	{\sc B.~Djehiche, S.~Hamad{\`{e}}ne, and A.~Popier}, {\em {A Finite Horizon
			Optimal Multiple Switching Problem}}, SIAM Journal on Control and
	Optimization, 48 (2009), pp.~2751--2770.
	
	\bibitem{Dumitrescu2014}
	{\sc R.~Dumitrescu, M.-c. Quenez, and A.~Sulem}, {\em {Generalized Dynkin Games
			and Doubly Reflected BSDEs with Jumps}}, oct 2014, arXiv:1310.2764v2.
	
	\bibitem{Ekstrom2008}
	{\sc E.~Ekstr{\"{o}}m and G.~Peskir}, {\em {Optimal Stopping Games for Markov
			Processes}}, SIAM Journal on Control and Optimization, 47 (2008),
	pp.~684--702.
	
	\bibitem{Ekstrom2006}
	{\sc E.~Ekstr{\"{o}}m and S.~Villeneuve}, {\em {On the value of optimal
			stopping games}}, The Annals of Applied Probability, 16 (2006),
	pp.~1576--1596.
	
	\bibitem{ElKaroui1981}
	{\sc N.~{El Karoui}}, {\em {Les aspects probabilistes du contr{\^{o}}le
			stochastique}}, Ecole d'Et{\'{e}} de Probabilit{\'{e}}s de Saint-Flour
	IX-1979,  (1981).
	
	\bibitem{Glasserman2003}
	{\sc P.~Glasserman}, {\em {Monte Carlo Methods in Financial Engineering}},
	vol.~53 of Stochastic Modelling and Applied Probability, Springer New York,
	New York, NY, 2003.
	
	\bibitem{Guo2008b}
	{\sc X.~Guo and P.~Tomecek}, {\em {Connections between Singular Control and
			Optimal Switching}}, SIAM Journal on Control and Optimization, 47 (2008),
	pp.~421--443.
	
	\bibitem{Hamadene2006}
	{\sc S.~Hamad{\`{e}}ne and M.~Hassani}, {\em {BSDEs with two reflecting
			barriers driven by a Brownian motion and Poisson noise and related Dynkin
			game}}, Electronic Journal of Probability, 11 (2006), pp.~121--145.
	
	\bibitem{Hamadene2007}
	{\sc S.~Hamad{\`{e}}ne and M.~Jeanblanc}, {\em {On the Starting and Stopping
			Problem: Application in Reversible Investments}}, Mathematics of Operations
	Research, 32 (2007), pp.~182--192.
	
	\bibitem{Jacod2003}
	{\sc J.~Jacod and A.~N. Shiryaev}, {\em {Limit Theorems for Stochastic
			Processes}}, vol.~288 of Grundlehren der mathematischen Wissenschaften,
	Springer Berlin Heidelberg, Berlin, Heidelberg, 2003.
	
	\bibitem{Kifer2000}
	{\sc Y.~Kifer}, {\em {Game options}}, Finance and Stochastics, 4 (2000),
	pp.~443--463.
	
	\bibitem{Kobylanski2012}
	{\sc M.~Kobylanski and M.-C. Quenez}, {\em {Optimal stopping time problem in a
			general framework}}, Electronic Journal of Probability, 17 (2012).
	
	\bibitem{Kobylanski2014}
	{\sc M.~Kobylanski, M.-C. Quenez, and M.~R. de~Campagnolle}, {\em {Dynkin games
			in a general framework}}, Stochastics An International Journal of Probability
	and Stochastic Processes, 86 (2014), pp.~304--329.
	
	\bibitem{Kuhn2007}
	{\sc C.~K{\"{u}}hn, A.~E. Kyprianou, and K.~van Schaik}, {\em {Pricing Israeli
			options: a pathwise approach}}, Stochastics An International Journal of
	Probability and Stochastic Processes, 79 (2007), pp.~117--137.
	
	\bibitem{Kyprianou2004}
	{\sc A.~E. Kyprianou}, {\em {Some calculations for Israeli options}}, Finance
	and Stochastics, 8 (2004), pp.~73--86.
	
	\bibitem{Martyr2014a}
	{\sc R.~Martyr}, {\em {Dynamic programming for discrete-time finite horizon
			optimal switching problems with negative switching costs}}, 2015,
	arXiv:1411.3981.
	
	\bibitem{Martyr2014b}
	\leavevmode\vrule height 2pt depth -1.6pt width 23pt, {\em {Finite-horizon
			optimal multiple switching with signed switching costs}}, 2015,
	arXiv:1411.3971.
	
	\bibitem{Morimoto1982}
	{\sc H.~Morimoto}, {\em {Optimal stopping and a martingale approach to the
			penalty method}}, Tohoku Mathematical Journal, 34 (1982), pp.~407--416.
	
	\bibitem{Morimoto1984a}
	\leavevmode\vrule height 2pt depth -1.6pt width 23pt, {\em {Dynkin games and
			martingale methods}}, Stochastics, 13 (1984), pp.~213--228.
	
	\bibitem{Peskir2009}
	{\sc G.~Peskir}, {\em {Optimal Stopping Games and Nash Equilibrium}}, Theory of
	Probability {\&} Its Applications, 53 (2009), pp.~558--571.
	
	\bibitem{Peskir2006}
	{\sc G.~Peskir and A.~N. Shiryaev}, {\em {Optimal Stopping and Free-Boundary
			Problems}}, Lectures in Mathematics. ETH Z{\"{u}}rich, Birkh{\"{a}}user
	Basel, 2006.
	
	\bibitem{Pham2013}
	{\sc T.~Pham and J.~Zhang}, {\em {Some norm estimates for semimartingales}},
	Electronic Journal of Probability, 18 (2013), pp.~1--26.
	
	\bibitem{Rogers2000a}
	{\sc L.~C.~G. Rogers and D.~Williams}, {\em {Diffusions, Markov Processes and
			Martingales: Volume 1, Foundations}}, Cambridge University Press, Cambridge,
	2nd~ed., 2000.
	
	\bibitem{Rogers2000b}
	\leavevmode\vrule height 2pt depth -1.6pt width 23pt, {\em {Diffusions, Markov
			Processes and Martingales: Volume 2, It{\^{o}} Calculus}}, Cambridge
	University Press, Cambridge, 2nd~ed., 2000.
	
	\bibitem{Yushkevich2002}
	{\sc A.~Yushkevich and E.~Gordienko}, {\em {Average optimal switching of a
			Markov chain with a Borel state space}}, Mathematical Methods of Operations
	Research (ZOR), 55 (2002), pp.~143--159.
	
\end{thebibliography}

\end{document}